\newtheorem{theorem}{Theorem}[section]
\newtheorem{corollary}[theorem]{Corollary}
\newtheorem{proposition}[theorem]{Proposition}
\newtheorem{lemma}[theorem]{Lemma}
\theoremstyle{remark}
\newtheorem*{remark}{Remark}
\newtheorem*{remarks}{Remarks}
\newcommand{\R}{\mathbb{R}}
\newcommand{\RP}{\mathbb{R}_+}
\newcommand{\N}{\mathbb{N}}
\newcommand{\Sp}{\mathbb{S}}
\newcommand{\ud}{\textup{d}}
\newcommand{\re}{{\mathrm{e}}}
\newcommand{\eps}{\varepsilon}
\renewcommand{\Pr}{\mathbb{P}}
\DeclareMathOperator{\Exp}{\mathbb{E}}
\DeclareMathOperator{\Var}{\mathbb{V}ar}
\DeclareMathOperator{\hull}{hull} 
\DeclareMathOperator{\Int}{int} 
\newcommand{\as}{{\ \mathrm{a.s.}}}
\DeclareMathOperator{\trace}{tr} 
\newcommand{\tra}{{\scalebox{0.6}{$\top$}}}
\newcommand{\per}{{\mkern -1mu \scalebox{0.5}{$\perp$}}}
\newcommand{\cH}{{\mathcal{H}}}
\newcommand{\cA}{{\mathcal{A}}}
\newcommand{\cN}{{\mathcal{N}}}
\newcommand{\cS}{{\mathcal{S}}}
\newcommand{\cL}{{\mathcal{L}}}
\newcommand{\cC}{{\mathcal{C}}}
\newcommand{\cK}{{\mathcal{K}}}
\newcommand{\1}{{\bf 1}}
\newcommand{\tod}{\stackrel{d}{\longrightarrow}}
\newcommand{\leb}{\cA}
\newcommand{\sperp}{\sigma^2_{\mu_\per}}
\newcommand{\spara}{\sigma^2_{\mu}}
\newcommand{\blob}{\mkern 1.5mu \raisebox{1.7pt}{\scalebox{0.4}{$\bullet$}} \mkern 1.5mu}
\def\namedlabel#1#2{\begingroup  
    (#2)%
    \def\@currentlabel{#2}%
    \phantomsection\label{#1}\endgroup
}
\begin{document}

\title{Convex hulls of random walks and their scaling limits}
\author{Andrew R.\ Wade \and Chang Xu}
 
\maketitle

\begin{abstract}
For the perimeter length and the area of the
convex hull of the first $n$ steps of a planar random walk,
we study $n \to \infty$ mean and variance asymptotics and establish non-Gaussian
distributional limits. Our results apply to random walks with drift
(for the area) and walks with no drift (for both area and perimeter length) under mild moments assumptions on the increments. These results complement
and contrast with
previous work which showed that the perimeter length in the case with drift satisfies a central limit theorem.
We deduce these results from weak convergence statements for the convex hulls of random walks
to scaling limits defined in terms of convex hulls of certain Brownian motions. We give bounds
that confirm that the   limiting variances in our results are non-zero. 
\end{abstract}

\medskip

\noindent
{\em Key words:}  Convex hull, random walk, Brownian motion, variance asymptotics, scaling limits.

\medskip

\noindent
{\em AMS Subject Classification:} 60G50, 60D05 (Primary)  60J65, 60F05, 60F17 (Secondary)

\section{Introduction}

Random walks are classical objects in probability theory. Recent
 attention has focussed on various geometrical aspects of random walk trajectories.
Many of the questions of stochastic geometry, traditionally concerned with functionals of independent random points, are also of interest for point sets generated by random walks. 
Here we examine
 the asymptotic behaviour of the \emph{convex hull}
 of the first $n$ steps of a random walk in $\R^2$, a
  natural geometrical characteristic of the process.
Study of the convex hull of planar random walk goes back to Spitzer and Widom \cite{sw}
 and the continuum analogue, convex hull of planar Brownian motion,
to L\'evy \cite[\S52.6, pp.~254--256]{levybook}; both have received renewed interest recently, in part motivated by
applications arising
for example in modelling the `home range' of animals. See \cite{mcr} for a recent survey
of motivation and previous work. 
The method of the present paper in part relies on an analysis of \emph{scaling limits},
and thus links
 the discrete and continuum settings.

Let $Z$ be a random vector in $\R^2$,  and let $Z_1, Z_2, \ldots$ be independent copies of $Z$.
Set $S_0 := 0$ and $S_n := \sum_{k=1}^n Z_k$; $S_n$ is the planar random walk, started at the origin, 
with increments distributed as $Z$.
We will impose a moments condition of the following form:
\begin{description}
\item[\namedlabel{ass:moments}{M$_p$}]
Suppose that $\Exp [ \| Z \|^p ] < \infty$.
\end{description}
\emph{Throughout the paper}
 we assume (usually tacitly) 
that the $p=2$ case of \eqref{ass:moments} holds.
For several of our results we impose a stronger condition and
assume that \eqref{ass:moments} holds for some $p>2$, in which case we
say so explicitly.

Given \eqref{ass:moments} holds for some $p \geq 2$, 
 both
 $\mu := \Exp Z \in \R^2$, the mean drift vector
of the walk, and
$\Sigma := \Exp [ (Z - \mu)(Z-\mu)^\tra]$, the covariance
matrix associated with $Z$, are well defined;
 $\Sigma$ is positive semidefinite and symmetric.
We also write $\sigma^2 := \trace \Sigma = \Exp [ \| Z - \mu \|^2 ]$.  Here and elsewhere   $Z$ and $\mu$ are viewed as column vectors, and $\| \blob \|$ is the Euclidean norm. 

For a subset $\cS$ of $\R^d$, its convex hull, which we denote
$\hull \cS$, is   the smallest  
  convex set that contains $\cS$.
We are interested in $\hull \{ S_0, S_1, \ldots, S_n \}$,
which 
is a (random) convex polygon,
and in particular in its
 perimeter length $L_n$ and area $A_n$. (See Figure \ref{fig1}.)

\begin{figure}
\center
\includegraphics[width=0.6\textwidth]{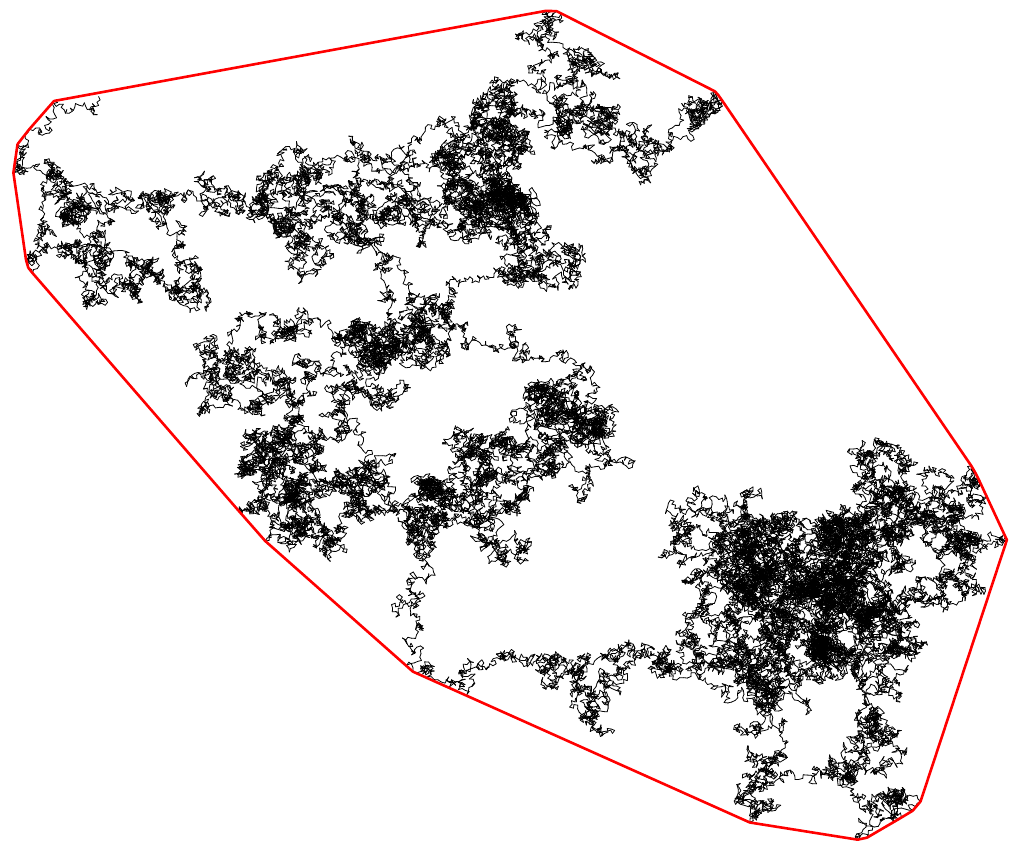}
\caption{Simulated path of a zero-drift random walk and its convex hull.}
\label{fig1}
\end{figure} 

The perimeter length $L_n$ has received some attention in the literature, initiated by
the remarkable formula of Spitzer and Widom \cite{sw}, which states that
\begin{equation}
\label{eq:sw}
\Exp L_n =  2 \sum_{k=1}^n k^{-1} \Exp \| S_k \|,  \text{ for all } n \in \N := \{ 1 ,2, \ldots \}.
\end{equation}
Much later, Snyder and Steele \cite{ss} obtained the law of large numbers 
$\lim_{n \to \infty} n^{-1} L_n = 2 \| \mu \|$, a.s.;
this is stated for the case $\mu \neq 0$ in \cite{ss} but the proof works equally well in the case $\mu=0$.
To prove their law of large numbers, Snyder and Steele used the Spitzer--Widom formula \eqref{eq:sw} and the variance bound 
\cite[Theorem 2.3]{ss}
\begin{equation}
\label{eq:ss}
n^{-1} \Var L_n \leq \frac{\pi^2  \sigma^2}{2},  \text{ for all } n \in \N.\end{equation}
 The natural question of the second-order behaviour of $L_n$  was left largely open;
  similar questions may be posed about $A_n$.
 
In \cite{wx}  a martingale-difference analysis was used to show that 
\begin{equation}
\label{eq:wx}
 \text{if } \mu \neq 0: ~~ \lim_{n \to \infty} n^{-1} \Var L_n = 4 \spara ,\end{equation}
where we introduce the decomposition $\sigma^2 = \spara + \sperp$ with 
\[ \spara := \Exp \left[ \left( ( Z - \mu) \cdot \hat \mu \right)^2 \right] = \Exp [ ( Z \cdot \hat \mu )^2 ] - \| \mu \|^2 \in \RP.\]
Here and elsewhere,  `$\cdot$' denotes the scalar product,  $\hat \mu := \| \mu \|^{-1} \mu$ for $\mu \neq 0$,
and $\RP := [0,\infty)$. 
In \cite{wx}, a central limit theorem to accompany \eqref{eq:wx} was also obtained: provided $\spara > 0$,
$n^{-1/2} ( L_n - \Exp L_n )$ converges in distribution to
a normal random variable with mean 0 and variance $4\spara$.
If $\Sigma$ is positive definite, then both $\spara$ and $\sperp$ are  strictly positive,
but our results are still of interest when one or other of them is zero (the case where both are zero
being entirely trivial).

The aims of the present
 paper are to provide second-order information for $L_n$ in the case $\mu =0$, and   to study the area $A_n$
for both the cases $\mu =0$ and $\mu \neq 0$. For example,  we will show that
\begin{alignat}{2}
\label{eq:three_vars}
\text{if } \mu \neq 0: ~~ &   && \lim_{n \to \infty} n^{-3} \Var A_n   = v_+ \| \mu \|^2 \sperp ; \nonumber\\
\text{if } \mu = 0: ~~  & \lim_{n \to \infty} n^{-1} \Var L_n = u_0 ( \Sigma ) , \text{ and } &&  \lim_{n \to \infty} n^{-2} \Var A_n   = v_0 \det \Sigma  
 .\end{alignat}
The quantities $ v_0$ and $v_+$ in \eqref{eq:three_vars} are finite and positive, 
as is $u_0( \blob )$ provided $\sigma^2 \in (0,\infty)$,
and these quantities are in fact variances associated with convex hulls of Brownian scaling limits
for the walk. These scaling limits provide the basis of the analysis in this paper; the methods are necessarily quite different
from those in \cite{wx}. The result $\lim_{n \to \infty} n^{-1} \Var L_n > 0$ in the case $\mu =0$ answers a question raised by Snyder and Steele \cite[\S5]{ss}.
For the constants $u_0(I)$ ($I$ being the identity matrix), $v_0$, and $v_+$, Table \ref{table2} gives 
numerical evaluations of rigorous bounds that we prove in Proposition \ref{prop:var_bounds} below, plus estimates from simulations. 

\begin{table}[!h]
\center
\def\arraystretch{1.4}
\begin{tabular}{c|ccc}
        &   lower bound   & simulation estimate  & upper bound \\
\hline
  $u_0 ( I)$ &  $2.65 \times 10^{-3}$  &  1.08   &  9.87   \\
  $v_0$      &  $8.15  \times 10^{-7}$ &  0.30   &  5.22   \\
  $v_+$      &  $1.44  \times 10^{-6}$ &  0.019  &  2.08   
	\end{tabular}
\caption{Each of the simulation estimates is
 based on $10^5$ instances of a walk of length $n = 10^5$. The final digit in each of the numerical upper (lower)
bounds has been rounded up (down).}
\label{table2}
\end{table}

Furthermore, we show below that  distributional
limits accompanying the three variance asymptotics in \eqref{eq:three_vars} are \emph{non-Gaussian}, excluding trivial cases,
 by contrast to the central limit theorem accompanying \eqref{eq:wx} from \cite{wx}.
Also notable is the comparison between the variance asymptotics for $\mu \neq 0$ in \eqref{eq:wx} and \eqref{eq:three_vars}:
each of the components $\spara$ and $\sperp$ of $\sigma^2$  contributes
 to exactly one of the asymptotics for $\Var L_n$ and $\Var A_n$.
Other results that we present below include asymptotics for expectations. 

\paragraph{Examples.}{Here are some examples to illustrate a range of
asymptotic behaviours exhibited some very simple random walks. 
We summarize what now is known in general in Table \ref{table1}.
\begin{itemize}
\item Suppose that $Z$ takes Cartesian vector values $(1,1)$, $(-1,-1)$, $(-1,1)$ and $(1,-1)$, each with probability $1/4$.
Then $S_n$ is symmetric simple random walk on $\mathbb{Z}^2$ with
 $\mu = (0,0)$ and $\sigma^2 = 2$.
We show below that $n^{-1/2} \Exp L_n \to \sqrt{8 \pi}$ (see also \cite{sw})
and $n^{-1} \Var L_n \to u_0(I) \in (0,\infty)$, while  $n^{-1} \Exp A_n \to \frac{\pi}{2}$ (see also \cite{bnb})
and $n^{-2} \Var A_n \to v_0 \in (0,\infty)$. 
\item Suppose $Z$ takes values $(1,1)$ and $(1,-1)$, each with probability $1/2$.
Then $S_n$ can be viewed as the space-time diagram 
of \emph{one-dimensional}
simple symmetric random walk. 
Here $\mu = (1,0)$, $\spara =0$, and $\sperp =1$.
It is known that $n^{-1} \Exp L_n \to2$ \cite{sw,ss}
and $\Var L_n = o(n)$ \cite{wx};
we show below that $n^{-3/2} \Exp A_n \to \frac{1}{3} \sqrt{2\pi}$
and $n^{-3} \Var A_n \to v_+ \in (0,\infty)$.
\item Suppose $Z$ takes values $(2,0)$ and $(0,0)$, each with probability $1/2$.
Now $\mu = (1,0)$,  $\spara =1$, and $\sperp =0$.
This time  $n^{-1} \Exp L_n \to2$ \cite{sw,ss}
and $n^{-1} \Var L_n \to 4$ \cite{wx};
trivially $A_n =0$ a.s.
\end{itemize}}

\begin{table}[!h]
\center
\def\arraystretch{1.4}
\begin{tabular}{cc|ccc}
 & &  limit exists for $\Exp$ & limit exists for $\Var$ & limit law \\
\hline
\multirow{2}{*}{ $\mu = 0$ } 
 &  $L_n$ & $n^{-1/2} \Exp L_n$$^\mathsection$  & $n^{-1} \Var L_n$  & non-Gaussian  \\
  & $A_n$ & $n^{-1} \Exp A_n$$^{\mathparagraph}$  & $n^{-2} \Var A_n$  & non-Gaussian  \\
\hline 
 \multirow{2}{*}{ $\mu \neq 0$ }  
 &  $L_n$ & $n^{-1} \Exp L_n$$^\mathsection$$^\dagger$  & $n^{-1} \Var L_n$$^\ddagger$ &  Gaussian$^\ddagger$ \\
  & $A_n$ & $n^{-3/2} \Exp A_n$ & $n^{-3} \Var A_n $ & non-Gaussian
\end{tabular}
\caption{Results originate from: $\mathsection$\!\cite{sw}; $\dagger$\!\cite{ss}; $\ddagger$\!\cite{wx};  $\mathparagraph$\!\cite{bnb} (in part);
the rest are new.
The limit laws exclude degenerate cases when associated variances vanish.}
\label{table1}
\end{table}

The outline of the rest of the paper is as follows. In Section \ref{sec:scaling_limits}
we describe  our scaling limit approach, and carry it through after presenting the necessary preliminaries;
the main results of this section, Theorems \ref{thm:limit-zero} and \ref{thm:limit-drift},
give weak convergence statements for convex hulls of random walks in the case of zero and non-zero drift, respectively.
Armed with these weak convergence results, we present 
asymptotics for expectations and variances of the quantities $L_n$ and $A_n$ in Section \ref{sec:asymptotics};
the arguments in this section rely in part on the scaling limit apparatus, and in part on direct random walk computations.
 This section concludes with upper and lower bounds for the limiting variances. Finally, Appendix \ref{sec:rw_norms} 
collects some auxiliary results on random walks that we use.

\section{Scaling limits for convex hulls}
\label{sec:scaling_limits}

\subsection{Overview}
\label{sec:outline}

We describe the general idea of our approach.
Recall that $S_n = \sum_{k=1}^n Z_k$ is the location of our random walk in $\R^2$ after $n$ steps. Write $\cS_n := \{ S_0, S_1, \ldots, S_n \}$.
Our strategy to study properties of the random convex set $\hull \cS_n$ (such as $L_n$ or $A_n$)
is to seek a weak limit for 
a suitable scaling of $\hull \cS_n$, which
we must hope to be 
the convex hull of some scaling limit representing the walk $\cS_n$.

In the case of  zero drift ($\mu = 0$) a candidate scaling limit for the walk is readily identified
 in terms  
 of planar Brownian motion. For the case $\mu \neq 0$,  the `usual' approach of
centering and then scaling the walk (to again obtain planar Brownian motion) is not 
 useful in our context, as this transformation
does not act on the convex hull in any sensible way. A better idea is to scale space differently in the direction of $\mu$ and in the
orthogonal direction.

In other words, in either case we consider
$\phi_n (\cS_n)$ for some \emph{affine} continuous scaling function
$\phi_n : \R^2 \to \R^2$. 
The convex hull is preserved under affine transformations, so 
\[ \phi_n ( \hull \cS_n ) = \hull  \phi_n ( \cS_n )  ,\]
the convex hull of a random set 
which will have a weak limit. We will then be able to deduce
scaling limits for quantities $L_n$ and $A_n$ provided, first, that we work in suitable spaces
on which our functionals of interest 
enjoy continuity, so that we can appeal to the continuous mapping theorem for weak limits,
and, second, that $\phi_n$ acts on length and area by simple scaling. The usual $n^{-1/2}$ scaling
when $\mu =0$ is fine; for $\mu \neq 0$ we scale space in one coordinate by $n^{-1}$ and in the other by $n^{-1/2}$,
which acts nicely on area, but \emph{not}  length. Thus these methods work exactly in the three cases
corresponding to \eqref{eq:three_vars}.

In view of the scaling limits that we expect, it is natural to work not with point
sets like $\cS_n$, but with continuous \emph{paths}; instead of $\cS_n$
we consider the interpolating path constructed as follows.
For each $n \in \N$ and all $t \in [0,1]$, define
\[ X_n (t) :=    S_{\lfloor nt \rfloor} + (nt - \lfloor nt \rfloor ) \left( S_{\lfloor nt \rfloor +1} - S_{\lfloor nt \rfloor} \right)  = S_{\lfloor nt \rfloor} + (nt - \lfloor nt \rfloor ) Z_{\lfloor nt \rfloor +1} .\]
Note that $X_n (0) =  S_0$ and $X_n (1) =   S_n$. 
Given $n$, we are interested in the convex hull of the image in $\R^2$ of the interval
$[0,1]$ under 
the continuous function 
$X_n$. Our scaling limits will be of the same form.

\subsection{Paths, hulls, and hulls of paths}
\label{sec:paths-and-hulls}

We introduce the setting in which we will describe our scaling limit results.
At this point, it is no extra difficulty to work in $\R^d$ for general $d \geq 2$.
 Let $\rho(x,y) = \| x-y\|$ denote the Euclidean distance between $x$ and $y$ in $\R^d$.
For $T > 0$, let $\cC ( [0,T] ; \R^d )$ denote the class of continuous functions
from $[0,T]$ to $\R^d$. Endow $\cC ( [0,T] ; \R^d )$ with the supremum metric
\[ \rho_\infty ( f, g) := \sup_{t \in [0,T]} \rho  ( f(t), g(t) ) , ~\text{for } f,g \in \cC ( [0,T] ; \R^d ). \]
Let $\cC^0 ( [0,T] ; \R^d )$ denote those functions in $\cC ( [0,T] ; \R^d )$  that map $0$ to the origin in $\R^d$.

Usually, we work with $T=1$, in which case we write simply
\[ \cC_d := \cC ( [0,1] ; \R^d ) , ~~\text{and}~~ \cC_d^0 := \{ f \in \cC_d : f(0) = 0 \} .\]
For example, $X_n \in \cC_d^0$ for each $n$.
For $f \in \cC ( [0,T] ; \R^d )$ and $t \in [0,T]$, define $f [0,t] := \{ f(s) : s \in [0,t] \}$, the image of $[0,t]$ under $f$.  Since $[0,t]$ is compact and $f$ is continuous,
the \emph{interval image} $f [0,t]$ is compact.
We view elements $f \in \cC ( [0,T] ; \R^d )$ as \emph{paths} indexed by time $[0,T]$, so that $f[0,t]$ is the section of the path up to time $t \in [0,T]$.

We need some notation and concepts from convex and integral geometry: we found \cite{gruber,sw} to be very useful.
 For a set $A \subseteq \R^d$, write 
 $\partial A$ for its boundary  
and $\Int (A) := A \setminus \partial A$ for its interior. 
For  $A \subseteq \R^d$
and a point $x \in \R^d$, set $\rho(x,A) := \inf_{y \in A} \rho(x,y)$,
with the usual convention that $\inf \emptyset = +\infty$.
 Write $\Sp_{d-1} := \{ e \in \R^d : \| e \| = 1 \}$
for the unit sphere in $\R^d$.

Let $\cK_d$ denote the collection of convex compact sets in $\R^d$, and 
$\cK^0_d := \{ A \in \cK_d : 0 \in A \}$
 those   that contain
 the origin. 
Given $A \in \cK_d$, for $r \geq 0$ set
\[ \pi_r ( A) := \{ x \in \R^d : \rho (x,A) \leq r \} ,\]
the \emph{parallel body} of $A$ at distance $r$.
 The \emph{support function} of $A \in \cK^0_d$ is $h_A$ defined by
 \[ h_A ( x ) := \sup_{y \in A} ( x \cdot y ) , ~~  x \in \R^d .\]
Note that $h_A: \R^d \to \RP$ determines $A$ via $A = \{ x : x \cdot e \leq h_A (e) \text{ for all } e \in \Sp_{d-1}\}$, and that,
for $A, B \in \cK^0_d$, we have $A \subseteq B$ if and only if $h_A(e) \leq h_B (e)$ for all $e \in \Sp_{d-1}$; see \cite[p.~56]{gruber}.
The Hausdorff metric on $\cK^0_d$
is defined for $A,B \in \cK^0_d$ by
\[ \rho_H ( A, B ) := \max \left\{ \sup_{x \in B} \rho(x,A) , \, \sup_{y \in A} \rho(y,B) \right\} .\]
Two equivalent descriptions of $\rho_H$ (see e.g.\ Proposition 6.3 of \cite{gruber}) are
\begin{align}
\label{eq:hausdorff_minkowski} \rho_H (A, B) & = \inf \left\{ r \geq 0 : A \subseteq \pi_r ( B ) \text{ and }  B \subseteq \pi_r ( A ) \right\};  \text{ and } \\
\label{eq:hausdorff_support} \rho_H (A,B) & = \sup_{e \in \Sp_{d-1} } \left| h_A (e) - h_B (e) \right| .
\end{align}
 
For the rest of this section we study some basic properties of the map from a continuous path to its convex hull.
Let $f \in \cC ([0,T] , \R^d)$. For any $t \in [0,T]$,   $f[0,t]$ is  compact, and hence
Carath\'eodory's theorem for convex hulls (see Corollary 3.1 of \cite[p.~44]{gruber})
shows that $\hull  f [0,t]  $ is also compact. So $\hull   f [0,t]   \in \cK_d$ is convex, bounded, and closed; in particular, it is a Borel set.

It mostly suffices to work with paths parametrized over 
$[0,1]$.
For $f \in \cC_d$, define
\[ H (f) := \hull   f [ 0,1 ]   .\]
The next result shows that the function $H : ( \cC^0_d , \rho_\infty ) \to ( \cK^0_d , \rho_H )$ is continuous.

\begin{lemma}
\label{lem:path-hull}
For any $f, g \in \cC^0_d$, we have $H(f), H(g) \in \cK^0_d$ and 
\begin{equation}
\label{eq:H-comparison}
  \rho_H ( H(f) , H(g) ) \leq \rho_\infty ( f,g).\end{equation}
\end{lemma}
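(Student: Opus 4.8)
The plan is to work with the support-function characterisation of the Hausdorff metric, since that converts the geometric statement into a pointwise estimate over the sphere. First I would note that $H(f)$ contains $f(0)=0$ for $f\in\cC^0_d$, and is compact by Carath\'eodory's theorem as recorded above, so indeed $H(f), H(g)\in\cK^0_d$ and the support functions $h_{H(f)}, h_{H(g)}$ are well-defined; by \eqref{eq:hausdorff_support} it then suffices to bound $\bigl| h_{H(f)}(e) - h_{H(g)}(e)\bigr|$ uniformly in $e\in\Sp_{d-1}$.

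The key observation is that taking the convex hull does not change the support function: for any bounded $\cS\subseteq\R^d$ and any $e$, $\sup_{y\in\hull\cS}(e\cdot y) = \sup_{y\in\cS}(e\cdot y)$, because $y\mapsto e\cdot y$ is linear, hence its supremum over a set equals its supremum over the convex hull of that set. Applying this with $\cS = f[0,1]$ gives $h_{H(f)}(e) = \sup_{t\in[0,1]} e\cdot f(t)$, and similarly for $g$. So the problem reduces to estimating $\bigl|\sup_{t} e\cdot f(t) - \sup_{t} e\cdot g(t)\bigr|$.

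Then I would invoke the elementary fact that for two bounded real-valued functions $\phi,\psi$ on a common domain, $\bigl|\sup\phi - \sup\psi\bigr| \le \sup|\phi-\psi|$. With $\phi(t) = e\cdot f(t)$ and $\psi(t) = e\cdot g(t)$ this yields
\[
\bigl| h_{H(f)}(e) - h_{H(g)}(e)\bigr| \le \sup_{t\in[0,1]} \bigl| e\cdot (f(t)-g(t))\bigr| \le \sup_{t\in[0,1]} \| f(t)-g(t)\| = \rho_\infty(f,g),
\]
using the Cauchy--Schwarz inequality $|e\cdot v|\le\|e\|\,\|v\| = \|v\|$ for $e\in\Sp_{d-1}$. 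Since the right-hand side is independent of $e$, taking the supremum over $e\in\Sp_{d-1}$ and applying \eqref{eq:hausdorff_support} gives \eqref{eq:H-comparison}.

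There is no real obstacle here; the only points needing a word of care are that the suprema defining the support functions are genuinely attained (or at least finite), which follows from compactness of $f[0,1]$, and the interchange of hull and supremum, which is the one line worth spelling out. An alternative, essentially equivalent, route is via \eqref{eq:hausdorff_minkowski}: if $r=\rho_\infty(f,g)$ then every point $f(t)$ lies within distance $r$ of $g(t)\in g[0,1]\subseteq H(g)$, so $f[0,1]\subseteq\pi_r(H(g))$, and since $\pi_r(H(g))$ is convex this forces $H(f)\subseteq\pi_r(H(g))$; by symmetry $H(g)\subseteq\pi_r(H(f))$, and \eqref{eq:hausdorff_minkowski} finishes the argument. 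I would present the support-function version as the main proof since it is the most transparent.
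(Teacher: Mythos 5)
Your proposal is correct, and your main argument takes a genuinely different route from the paper's. The paper argues via the parallel-body description \eqref{eq:hausdorff_minkowski}: it takes an arbitrary $x \in H(f)$, writes it as a convex combination $\sum_i \lambda_i f(t_i)$ using Carath\'eodory/Lemma 3.1 of \cite{gruber}, and pairs it with $y = \sum_i \lambda_i g(t_i) \in H(g)$, so that the triangle inequality gives $\rho(x,y) \leq \rho_\infty(f,g)$ and hence the mutual containments $H(f) \subseteq \pi_r(H(g))$ and $H(g) \subseteq \pi_r(H(f))$ with $r = \rho_\infty(f,g)$. You instead pass through the support-function description \eqref{eq:hausdorff_support}, using the (correct, and worth the one line you give it) observation that $h_{\hull \cS} = h_{\cS}$ for a linear functional, which reduces everything to the elementary inequality $|\sup\phi - \sup\psi| \leq \sup|\phi - \psi|$ plus Cauchy--Schwarz. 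Your route is arguably the more transparent of the two and scales trivially to any index set in place of $[0,1]$; the paper's route has the modest advantage of producing an explicit point-to-point matching between the two hulls (the same convex weights work for both), which is occasionally useful in its own right, and of not needing the equivalence \eqref{eq:hausdorff_support} at all. Your sketched alternative via $\pi_r$ is essentially the paper's proof, except that where the paper verifies $H(f) \subseteq \pi_r(H(g))$ pointwise through convex combinations, you get it from convexity of the parallel body $\pi_r(H(g))$; both mechanisms are sound.
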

 \begin{proof}
 Let $f, g \in \cC^0_d$. Then $H(f)$ and $H(g)$ are non-empty, as they 
 contain $f(0) = g(0)= 0$.
Consider  $x \in H(f)$. Since the convex hull of a set is the set of all convex combinations of points of the set
(see Lemma 3.1 of \cite[p.~42]{gruber}), 
there exist   $n \in \N$, 
weights 
$\lambda_1, \ldots, \lambda_n \geq 0$ with $\sum_{i=1}^n \lambda_i =1$,
and $t_1, \ldots, t_n \in [0,1]$ for which $x = \sum_{i=1}^n \lambda_i f (t_i)$.
 Then, taking $y = \sum_{i=1}^n \lambda_i g (t_i)$, we have that $y \in H(g)$ and, by the triangle inequality,
 \[ \rho (x,y)
\leq \sum_{i=1}^n \lambda_i \rho( f(t_i) , g(t_i) )
  \leq \rho_\infty (f,g) .\]
 Thus, writing $r = \rho_\infty (f,g)$,
 every $x \in H(f)$ has $x \in \pi_r ( H (g) )$,
 $H(g) \subseteq \pi_r ( H (f) )$. Thus, by \eqref{eq:hausdorff_minkowski}, we obtain \eqref{eq:H-comparison}.
 \end{proof}
  
 We end this section by showing that the map $t \mapsto \hull   f[0,t]  $ on $[0,T]$ is continuous if $f$ is continuous on $[0,T]$,
 so that the continuous trajectory $t \mapsto f(t)$ is accompanied by a continuous `trajectory' of 
 convex hulls. This observation was made
 by El Bachir \cite[pp.~16--17]{elbachir}; we take a different route based on the path-space result Lemma \ref{lem:path-hull}. First we need a lemma.
  
 \begin{lemma}
 \label{lem:path-stretch}
 Let $T >0$ and $f \in \cC ( [0,T] ; \R^d)$. Then the map defined for $t \in [0,T]$ by 
$t \mapsto  g_t$, where $g_t : [0,1] \to \R^d$ is given by $g_t (s) = f (t s)$, $s \in [0,1]$, is
a continuous function from $([0,T], \rho)$ to $( \cC_d, \rho_\infty )$.
 \end{lemma}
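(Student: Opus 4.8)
The plan is to show that the map $t \mapsto g_t$ is uniformly continuous, exploiting the uniform continuity of $f$ on the compact interval $[0,T]$. First I would recall that since $f \in \cC([0,T];\R^d)$ and $[0,T]$ is compact, $f$ is uniformly continuous: given $\eps > 0$ there exists $\delta > 0$ such that $\rho(f(u),f(v)) \leq \eps$ whenever $u,v \in [0,T]$ with $|u - v| \leq \delta$. Now fix $t, t' \in [0,T]$ and estimate $\rho_\infty(g_t, g_{t'})$. By definition,
\[
\rho_\infty(g_t, g_{t'}) = \sup_{s \in [0,1]} \rho\bigl( f(ts), f(t's) \bigr).
\]

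The key observation is that for each $s \in [0,1]$ the arguments $ts$ and $t's$ both lie in $[0,T]$, and $|ts - t's| = s |t - t'| \leq |t - t'|$. Hence if $|t - t'| \leq \delta$, then $|ts - t's| \leq \delta$ for every $s \in [0,1]$, and uniform continuity of $f$ gives $\rho(f(ts), f(t's)) \leq \eps$ for all such $s$. Taking the supremum over $s \in [0,1]$ yields $\rho_\infty(g_t, g_{t'}) \leq \eps$ whenever $|t - t'| \leq \delta$. This establishes (uniform) continuity of $t \mapsto g_t$ from $([0,T], \rho)$ to $(\cC_d, \rho_\infty)$, which is the claim.

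There is essentially no serious obstacle here: the only point to be slightly careful about is that the reparametrization $s \mapsto ts$ maps $[0,1]$ into $[0,T]$ (so $f$ is actually evaluated where it is defined), and that the modulus of continuity transfers because $s \leq 1$ contracts the gap $|t - t'|$ rather than expanding it. One could alternatively present the argument via the modulus of continuity $\omega_f(\delta) := \sup\{ \rho(f(u),f(v)) : u,v \in [0,T],\ |u-v| \leq \delta \}$, noting $\omega_f(\delta) \to 0$ as $\delta \downarrow 0$ and that $\rho_\infty(g_t, g_{t'}) \leq \omega_f(|t-t'|)$; this makes the uniform continuity transparent. Either way the proof is a couple of lines, and the real work in the surrounding development is in combining this lemma with Lemma \ref{lem:path-hull} to deduce continuity of $t \mapsto \hull f[0,t]$.
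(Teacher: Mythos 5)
Your proof is correct and follows essentially the same route as the paper: both arguments rest on the uniform continuity of $f$ on the compact interval $[0,T]$ and the key observation that $|ts - t's| \leq |t-t'|$ for $s \in [0,1]$, so the modulus of continuity of $f$ controls $\rho_\infty(g_t,g_{t'})$ directly. The only (trivial) point the paper makes explicit that you omit is the preliminary check that each $g_t$ actually lies in $\cC_d$, i.e.\ that $s \mapsto f(ts)$ is continuous, which follows at once from continuity of $f$.
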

 \begin{proof}
 First we fix $t \in [0,T]$ and show that $s \mapsto g_t (s)$ is continuous, so that $g_t \in \cC_d$ as claimed.
 Since $f$ is continuous on the compact interval $[0,T]$, it is uniformly continuous,
 and admits  
 a monotone modulus of continuity  $\mu_f : \RP \to \RP$ such that $\rho ( f(s_1), f(s_2) ) \leq \mu_f ( \rho (s_1, s_2 ))$
for all $s_1, s_2 \in [0,T]$, and   $\mu_f ( r ) \downarrow 0$ as $r \downarrow 0$ (see e.g.~\cite[p.~57]{kallenberg}).
 Hence
 \[ \rho ( g_t (s_1) , g_t (s_2) ) = \rho ( f(ts_1) , f(ts_2) ) \leq \mu_f ( \rho (ts_1 , ts_2)) = \mu_f ( t \rho (s_1, s_2 ) ) ,\]
 which tends to $0$ as $\rho(s_1, s_2) \to 0$. 
 Hence $g_t \in \cC_d$.

It remains to show that $t \mapsto g_t$ is continuous. But on $\cC_d$,
\begin{align*} \rho_\infty ( g_{t_1}, g_{t_2} ) & = \sup_{s \in [0,1]} \rho ( f(t_1 s) , f(t_2 s) ) \\
& \leq \sup_{s \in [0,1]} \mu_f ( \rho (t_1 s, t_2 s) ) \\
& = \mu_f ( \rho ( t_1, t_2 ) ) ,\end{align*}
which tends to $0$ as $\rho (t_1, t_2) \to 0$, again using the uniform continuity of $f$.
\end{proof}

Here is the path continuity result for convex hulls of continuous paths; cf \cite[pp.~16--17]{elbachir}.

\begin{proposition}
\label{prop:point-hull}
Let $T >0$ and $f \in \cC^0 ( [0,T] ; \R^d)$. Then the map defined for $t \in [0,T]$ by 
$t \mapsto  \hull   f[0,t]  $ is
a continuous function from $([0,T], \rho)$ to $( \cK^0_d, \rho_H )$.
\end{proposition}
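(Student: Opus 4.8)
The plan is to recognise the map $t \mapsto \hull f[0,t]$ as a composition of two maps whose continuity has already been established, namely the path-rescaling map of Lemma \ref{lem:path-stretch} and the path-to-hull map $H$ of Lemma \ref{lem:path-hull}. For $t \in [0,T]$, let $g_t \in \cC_d$ be given by $g_t(s) = f(ts)$ for $s \in [0,1]$, as in Lemma \ref{lem:path-stretch}. Since $f \in \cC^0([0,T];\R^d)$ we have $f(0)=0$, hence $g_t(0) = f(0) = 0$, so in fact $g_t \in \cC^0_d$ for every $t \in [0,T]$. The first step is the elementary identity relating the interval image of $f$ to the full image of the rescaled path: $g_t[0,1] = \{ f(ts) : s \in [0,1] \} = \{ f(u) : u \in [0,t] \} = f[0,t]$, and therefore $H(g_t) = \hull g_t[0,1] = \hull f[0,t]$.

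Consequently the map in question is exactly $H \circ \Phi$, where $\Phi : t \mapsto g_t$. By Lemma \ref{lem:path-stretch}, $\Phi$ is a continuous map from $([0,T],\rho)$ into $(\cC_d, \rho_\infty)$, and by the observation above its image lies in $\cC^0_d$; by Lemma \ref{lem:path-hull}, $H$ is continuous (indeed $1$-Lipschitz) from $(\cC^0_d, \rho_\infty)$ to $(\cK^0_d, \rho_H)$. A composition of continuous maps being continuous, the result follows. Moreover, chaining the $1$-Lipschitz bound \eqref{eq:H-comparison} with the modulus-of-continuity estimate from the proof of Lemma \ref{lem:path-stretch} yields the quantitative form $\rho_H(\hull f[0,t_1], \hull f[0,t_2]) \le \mu_f(\rho(t_1,t_2))$, where $\mu_f$ is a monotone modulus of continuity for $f$ on $[0,T]$.

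I do not expect any genuine obstacle: the two preceding lemmas already carry all the analytic content, and what remains is only the bookkeeping identity $g_t[0,1] = f[0,t]$ together with the trivial verification that the rescaled paths remain based at the origin, so that the domain of $H$ is respected. If one preferred to avoid Lemma \ref{lem:path-stretch} altogether, an alternative would be to estimate $\rho_H(\hull f[0,t_1], \hull f[0,t_2])$ directly by writing a convex combination of points $f(t_1 s)$ achieving a given element of $\hull f[0,t_1]$ and comparing it to the corresponding combination of points $f(t_2 s) \in \hull f[0,t_2]$, using uniform continuity of $f$; but routing through the two lemmas is cleaner.
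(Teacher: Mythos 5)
Your proof is correct and follows exactly the paper's route: recognise the map as $H \circ \Phi$ with $\Phi(t) = g_t$, check that $g_t \in \cC^0_d$ and that $g_t[0,1] = f[0,t]$, and invoke Lemmas \ref{lem:path-stretch} and \ref{lem:path-hull}. The quantitative bound $\rho_H(\hull f[0,t_1],\hull f[0,t_2]) \le \mu_f(\rho(t_1,t_2))$ is a nice extra but not needed for the statement.
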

\begin{proof}
By Lemma \ref{lem:path-stretch}, $t \mapsto g_t$ is continuous, where $g_t(s) = f(ts)$, $s \in [0,1]$. Note that,
since $f(0)= 0$, $g_t \in \cC_d^0$.
But the sets $f [0,t]$ and $g_t [0,1]$ coincide, so $\hull   f [0,t]    = H (g_t) \in \cK_d^0$, and, by Lemma \ref{lem:path-hull}, $g_t \mapsto H(g_t)$ is continuous.
Thus $t \mapsto H(g_t)$ is the composition of two continuous functions, hence itself a continuous function.
\end{proof}

 \subsection{Functionals of planar convex hulls}
\label{sec:functionals}
 
Now, and for the rest of the paper, we return to $d=2$ to address our main questions of interest; parts of what follows carry over to general $d \geq 2$,
but we do not pursue that generality here.
We consider functionals $\cA: \cK_2 \to \RP$
and $\cL : \cK_2 \to \RP$ given by the area and the perimeter length of convex compact sets in the plane. Formally,
 we 
 define $\cA$ as Lebesgue measure on $\R^2$, and then 
 \begin{equation}
 \label{eq:L-def}
 \cL (A) := \lim_{r \downarrow 0} \left( \frac{\leb ( \pi_r (A)) - \leb (A)}{r} \right),
 \text{ for } A \in \cK_2  . \end{equation}
 The limit in \eqref{eq:L-def} exists by the \emph{Steiner formula} of integral geometry (see e.g.~\cite{sw}),
 which expresses $\leb ( \pi_r(A))$ as a quadratic polynomial in $r$ whose coefficients
 are given in terms of the \emph{intrinsic volumes} of $A$:
 \begin{equation}
 \label{eq:steiner}
\leb ( \pi_r(A)) = \leb (A) + r \cL (A) + \pi r^2 \1 \{ A \neq \emptyset \} .\end{equation}
 In particular,  with $\cH_{d}$ denoting $d$-dimensional Hausdorff measure,
 \[ \cL (A) = \begin{cases}\phantom{2} \cH_{1} ( \partial A ) & \text{if } \Int (A) \neq \emptyset , \\
2 \cH_{1} ( \partial A ) & \text{if } \Int (A)  = \emptyset . 
\end{cases} \]
  For $A \in \cK_2^0$,
 \emph{Cauchy's formula} states
\[ \cL (A) = \int_{\Sp_1} h_A ( e ) \ud e . \]
It follows from Cauchy's formula that $\cL$ is increasing in the sense that if $A, B \in \cK^0_2$ satisfy $A \subseteq B$, then $\cL (A) \leq \cL (B)$;
clearly the functional $\cA$ is also increasing. The next result shows that the functions $\cL$ and $\cA$ are both continuous from $(\cK^0_2 , \rho_H )$ to $( \RP , \rho)$.

\begin{lemma}
\label{lem:functional-continuity}
Suppose that $A, B \in \cK^0_2$. Then
 \begin{align}
\label{eq:L-comparison}
  \rho ( \cL(A) , \cL(B) ) & \leq 2 \pi \rho_H (A,B) ;\\
  \label{eq:A-comparison}
  \rho ( \cA(A) , \cA(B) ) & \leq \pi   \rho_H (A,B)^2 +  ( \cL(A) \vee \cL(B) ) \rho_H (A,B)  .
  \end{align}
\end{lemma}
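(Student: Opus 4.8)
The plan is to exploit the two equivalent descriptions of the Hausdorff metric, \eqref{eq:hausdorff_minkowski} and \eqref{eq:hausdorff_support}, together with the Steiner and Cauchy formulas, and to reduce both bounds to monotonicity estimates for $\cL$ and $\cA$ along parallel bodies. Write $r := \rho_H(A,B)$. By \eqref{eq:hausdorff_minkowski} we have $A \subseteq \pi_r(B)$ and $B \subseteq \pi_r(A)$, and since $\pi_r$ of a nonempty convex compact set is again convex compact and contains the origin, we may apply the monotonicity of $\cL$ and $\cA$ noted just before the lemma.

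For \eqref{eq:L-comparison}: from $A \subseteq \pi_r(B)$ and monotonicity of $\cL$ we get $\cL(A) \leq \cL(\pi_r(B))$, and the Steiner formula \eqref{eq:steiner} applied to $\pi_{r}(B)$ — using $\pi_r(\pi_s(B)) = \pi_{r+s}(B)$ to read off its intrinsic volumes, or more directly differentiating \eqref{eq:steiner} — gives $\cL(\pi_r(B)) = \cL(B) + 2\pi r$ (assuming $B \neq \emptyset$, which holds as $0 \in B$). Hence $\cL(A) - \cL(B) \leq 2\pi r$, and by symmetry $|\cL(A) - \cL(B)| \leq 2\pi r$, which is \eqref{eq:L-comparison}. (Alternatively one can run the same argument directly from Cauchy's formula and \eqref{eq:hausdorff_support}, since $|h_A(e) - h_B(e)| \leq r$ for all $e \in \Sp_1$ integrates to $|\cL(A) - \cL(B)| \leq \int_{\Sp_1} r \,\ud e = 2\pi r$; this is cleaner and I would probably present it this way.)

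For \eqref{eq:A-comparison}: again from $A \subseteq \pi_r(B)$ and monotonicity of $\cA$, together with the Steiner formula \eqref{eq:steiner} for $\pi_r(B)$, we get
\[ \cA(A) \leq \cA(\pi_r(B)) = \cA(B) + r\,\cL(B) + \pi r^2 . \]
By symmetry $\cA(B) \leq \cA(A) + r\,\cL(A) + \pi r^2$, so
\[ \bigl| \cA(A) - \cA(B) \bigr| \leq \pi r^2 + r\,\bigl( \cL(A) \vee \cL(B) \bigr), \]
which is exactly \eqref{eq:A-comparison}.

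I do not anticipate a genuine obstacle here; the only point requiring a little care is the justification that $\cL(\pi_r(B)) = \cL(B) + 2\pi r$ and $\cA(\pi_r(B)) = \cA(B) + r\cL(B) + \pi r^2$, i.e.\ that taking a parallel body shifts the intrinsic volumes in the expected way. This follows from the semigroup property $\pi_r \circ \pi_s = \pi_{r+s}$ combined with \eqref{eq:steiner}: expanding $\leb(\pi_{r+s}(B))$ as a polynomial in $r$ with $s$ fixed and matching coefficients against \eqref{eq:steiner} applied to $\pi_s(B)$ identifies $\cL(\pi_s(B))$ and $\cA(\pi_s(B))$. For the length bound the Cauchy-formula route sidesteps this entirely and is the route I would favour for \eqref{eq:L-comparison}; for the area bound the Steiner-formula computation seems unavoidable but is elementary.
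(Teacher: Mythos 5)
Your proof is correct and essentially matches the paper's: the area bound \eqref{eq:A-comparison} is obtained identically via the inclusion $A \subseteq \pi_r(B)$, monotonicity of $\cA$, and the Steiner formula \eqref{eq:steiner}, while your preferred route for \eqref{eq:L-comparison} --- integrating $|h_A(e)-h_B(e)| \leq \rho_H(A,B)$ over $\Sp_1$ via Cauchy's formula and \eqref{eq:hausdorff_support} --- is exactly the paper's argument. The parallel-body derivation of $\cL(\pi_r(B)) = \cL(B) + 2\pi r$ is a valid alternative but, as you note yourself, unnecessary.
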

\begin{proof}
First consider $\cL$. By Cauchy's formula and  the triangle inequality,
\[
\left| \cL (A) - \cL(B) \right|    = \left| \int_{\Sp_1} \left( h_{A} ( e ) - h_{B} ( e ) \right) \ud e \right| 
 \leq 2 \pi \sup_{e \in \Sp_1} \left| h_{A} ( e ) - h_{B} ( e ) \right|   ,\]
which with   \eqref{eq:hausdorff_support} gives  \eqref{eq:L-comparison}.

Now consider $\cA$. Set $r = \rho_H (A,B)$. Then, by \eqref{eq:hausdorff_minkowski}, $A \subseteq \pi_r (B)$.
Hence
\[ \cA (A) \leq \cA ( \pi_r (B) ) \leq \cA(B) + r \cL (B) + \pi r^2 ,\]
by \eqref{eq:steiner}. With the symmetric
 argument starting from $B \subseteq \pi_r (A)$, we get \eqref{eq:A-comparison}.
\end{proof}

\subsection{Brownian convex hulls as scaling limits}
\label{sec:Brownian-hulls}

The two different scalings outlined in Section \ref{sec:outline}, for the cases $\mu =0$ and $\mu \neq 0$,
lead to different scaling limits for the random walk. Both are associated with Brownian motion.

In the case $\mu =0$, the scaling limit is the usual planar Brownian motion, at least when $\Sigma = I$, the identity matrix.
Let $b:=( b(s) )_{s \in [0,1]}$ denote standard Brownian motion in $\R^2$, started at $b(0) = 0$.  
For convenience we may assume $b \in \cC_2^0$ (we can work on a probability space for which continuity holds for all sample points, rather than merely almost all).
For $t \in [0,1]$, let $h_t := \hull   b[0,t]   \in \cK_2^0$ denote the convex hull of the Brownian path up to time $t$.
By Proposition \ref{prop:point-hull}, $t \mapsto h_t$ is continuous. Much is known about the properties of $h_t$: see e.g.\ 
\cite{chm,elbachir,evans,klm}.
We also set
\[ \ell_t := \cL ( h_t ) , ~~\text{and}~~ a_t := \cA ( h_t ) ,\]
the perimeter length and area of the standard Brownian convex hull.
By Lemma \ref{lem:functional-continuity},
the processes $t \mapsto \ell_t$ and $t \mapsto a_t$  have continuous and non-decreasing sample paths.

We also need to work with the case of general covariances $\Sigma$;
to do so we introduce more notation and recall some facts about multivariate Gaussian random vectors.
For definiteness, we view vectors as Cartesian column vectors when required.
Since $\Sigma$ is positive semidefinite and symmetric, 
there is a (unique) positive semidefinite symmetric matrix square-root $\Sigma^{1/2}$
for which $\Sigma = (\Sigma^{1/2} )^2$.
The   map $x \mapsto \Sigma^{1/2} x$ associated with $\Sigma^{1/2}$ is a linear transformation on $\R^2$
with Jacobian $\det \Sigma^{1/2} = \sqrt{ \det \Sigma}$; 
hence  $\leb ( \Sigma^{1/2} A ) =  \leb (A)  \sqrt{ \det \Sigma }$
for any measurable $A \subseteq \R^2$. 

If $W \sim \cN (0, I)$, then $\Sigma^{1/2} W \sim \cN (0, \Sigma)$,
a bivariate normal distribution with mean $0$
and covariance $\Sigma$; the notation permits $\Sigma =0$,
in which case $\cN(0,0)$ stands for the degenerate
normal distribution with point mass at $0$. Similarly, given $b$ a standard Brownian motion on $\R^2$, the diffusion $\Sigma^{1/2} b$
is \emph{correlated} planar Brownian motion with covariance matrix $\Sigma$. 
 We write `$\Rightarrow$' to indicate weak convergence.

\begin{theorem}
\label{thm:limit-zero} 
 Suppose that   $\mu =0$.
Then, as $n \to \infty$,
   \[ n^{-1/2} \hull \{ S_0, S_1, \ldots, S_n \} \Rightarrow \Sigma^{1/2} h_1 , \]
   in the sense of weak convergence on $(\cK_2^0 , \rho_H )$.
  \end{theorem}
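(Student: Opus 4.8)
The plan is to build the result from Donsker's theorem together with the continuity of the path-to-hull map established in Lemma~\ref{lem:path-hull}. Recall the interpolating path $X_n \in \cC_2^0$ defined in Section~\ref{sec:outline}, and note that $\hull \{ S_0, S_1, \ldots, S_n \}$ equals $\hull X_n[0,1] = H(X_n)$, because the convex hull of a finite point set agrees with the convex hull of the polygonal path joining those points (the interpolating segments lie in the hull). Thus the scaled object in the statement is $n^{-1/2} H(X_n) = H(n^{-1/2} X_n)$, the last equality because $H$ commutes with the linear scaling $x \mapsto n^{-1/2} x$.

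First I would invoke the multivariate functional central limit theorem (Donsker's invariance principle): since $\Exp[\|Z\|^2] < \infty$, $\mu = 0$, and $\Exp[(Z-\mu)(Z-\mu)^\tra] = \Sigma$, the rescaled walk $n^{-1/2} X_n$ converges weakly in $(\cC_2^0, \rho_\infty)$ to $\Sigma^{1/2} b$, correlated planar Brownian motion with covariance $\Sigma$ (for $\Sigma = 0$ this is the degenerate statement that $n^{-1/2} X_n \Rightarrow 0$, which still holds). The slight subtlety here is that the standard Donsker statement is usually phrased for the piecewise-linear or the step-function interpolation of $S_{\lfloor nt \rfloor}$; the difference between $n^{-1/2} X_n(t)$ and $n^{-1/2} S_{\lfloor nt \rfloor}$ is $n^{-1/2}(nt - \lfloor nt\rfloor) Z_{\lfloor nt\rfloor + 1}$, which has supremum norm bounded by $n^{-1/2} \max_{1 \le k \le n+1}\|Z_k\|$; under \eqref{ass:moments} with $p = 2$ this tends to $0$ in probability (a routine maximal estimate), so the two interpolations have the same weak limit.

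Next, apply the continuous mapping theorem. By Lemma~\ref{lem:path-hull}, $H : (\cC_2^0, \rho_\infty) \to (\cK_2^0, \rho_H)$ is (Lipschitz, hence) continuous, so $n^{-1/2} H(X_n) = H(n^{-1/2} X_n) \Rightarrow H(\Sigma^{1/2} b)$ on $(\cK_2^0, \rho_H)$. Finally, identify the limit: since $\Sigma^{1/2}$ is a linear map and convex hull commutes with affine transformations, $H(\Sigma^{1/2} b) = \hull (\Sigma^{1/2} b)[0,1] = \Sigma^{1/2} \hull b[0,1] = \Sigma^{1/2} h_1$, which is precisely the claimed limit. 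One should also check measurability of the maps involved, but $H$ being continuous between Polish spaces takes care of this.

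The main obstacle, such as it is, is purely bookkeeping: confirming that the chosen interpolation $X_n$ satisfies the invariance principle (i.e.\ the negligibility of the $n^{-1/2}\max_k \|Z_k\|$ correction term) and that the paper's conventions on the metric spaces $\cC_2^0$ and $\cK_2^0$ make the continuous mapping theorem directly applicable. There is no deep difficulty — all the genuine work (continuity of $H$) has already been done in Lemma~\ref{lem:path-hull} — so the proof is essentially an assembly of Donsker, continuous mapping, and the affine-equivariance of the convex hull.
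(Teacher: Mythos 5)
Your proposal is correct and follows essentially the same route as the paper: Donsker's theorem for $n^{-1/2}X_n$, continuity of $H$ from Lemma~\ref{lem:path-hull} plus the continuous mapping theorem, and affine-equivariance of the convex hull to identify the limit as $\Sigma^{1/2}h_1$. The extra care you take over the interpolation term is a reasonable bookkeeping point that the paper leaves implicit.
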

\begin{proof}
Donsker's theorem 
implies that $n^{-1/2} X_n \Rightarrow \Sigma^{1/2} b$
on $(\cC_2^0 , \rho_\infty )$. 
Now, the point set $X_n [0,1]$ is the union of the line segments
$\{  S_{k} +\theta (S_{k+1} - S_k)  : \theta \in [0,1] \}$
over $k=0,1,\ldots, n-1$. Since the convex hull is preserved under  affine transformations,   
\[ 
H ( n^{-1/2} X_n ) =
 n^{-1/2} H (X_n) = n^{-1/2} \hull \{ S_0, S_1, \ldots, S_n \}  .\]
By Lemma \ref{lem:path-hull}, $H$ is continuous, and so the  continuous mapping theorem 
(see e.g.~\cite[p.~76]{kallenberg}) implies that $n^{-1/2} \hull \{ S_0, S_1, \ldots, S_n \} \Rightarrow H ( \Sigma^{1/2} b )$ on $(\cK_2^0 , \rho_H )$.
Finally, invariance of the convex hull under affine transformations shows $H (\Sigma^{1/2} b ) = \Sigma^{1/2} H (b) = \Sigma^{1/2} h_1$.
\end{proof}

Theorem \ref{thm:limit-zero} together with the continuous mapping theorem and Lemma \ref{lem:functional-continuity}
implies the following distributional limit results in the case $\mu =0$. Here and subsequently `$\tod$' denotes
convergence in distribution for $\R$-valued random variables.

\begin{corollary}
\label{cor:zero-limits}
 Suppose that 
$\mu =0$. 
Then, as $n \to \infty$,
   \[ n^{-1/2} L_n \tod \cL ( \Sigma^{1/2} h_1 ) , ~~\text{and} ~~ n^{-1 } A_n \tod  \cA ( \Sigma^{1/2} h_1  ) = a_1 \sqrt{\det \Sigma} .\]
\end{corollary}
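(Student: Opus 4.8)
The plan is to obtain Corollary \ref{cor:zero-limits} as a direct consequence of Theorem \ref{thm:limit-zero}, the continuous mapping theorem, and the continuity of the functionals $\cL$ and $\cA$ established in Lemma \ref{lem:functional-continuity}. First I would record that Theorem \ref{thm:limit-zero} gives $n^{-1/2} \hull \cS_n \Rightarrow \Sigma^{1/2} h_1$ as random elements of $(\cK_2^0, \rho_H)$. Since Lemma \ref{lem:functional-continuity} shows that $\cL : (\cK_2^0, \rho_H) \to (\RP, \rho)$ and $\cA : (\cK_2^0, \rho_H) \to (\RP, \rho)$ are (globally) continuous, the continuous mapping theorem applies and yields $\cL ( n^{-1/2} \hull \cS_n ) \tod \cL ( \Sigma^{1/2} h_1 )$ and likewise for $\cA$.

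Next I would identify the left-hand sides with the scaled versions of $L_n$ and $A_n$. Here the key point is the scaling behaviour of length and area under the homothety $x \mapsto n^{-1/2} x$: the perimeter length scales by the factor $n^{-1/2}$, so $\cL ( n^{-1/2} \hull \cS_n ) = n^{-1/2} \cL ( \hull \cS_n ) = n^{-1/2} L_n$; and area scales by $n^{-1}$, so $\cA ( n^{-1/2} \hull \cS_n ) = n^{-1} A_n$. Both identities follow from \eqref{eq:steiner} (or, for $\cL$, from Cauchy's formula together with $h_{cA}(e) = c\, h_A(e)$ for $c > 0$), combined with the fact that $\hull \cS_n = H(X_n)$ as already noted in the proof of Theorem \ref{thm:limit-zero}. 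Substituting these identities into the conclusions of the previous paragraph gives $n^{-1/2} L_n \tod \cL ( \Sigma^{1/2} h_1 )$ and $n^{-1} A_n \tod \cA ( \Sigma^{1/2} h_1 )$.

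Finally I would simplify $\cA ( \Sigma^{1/2} h_1 )$. Since $\cA$ is Lebesgue measure and $x \mapsto \Sigma^{1/2} x$ is a linear map with Jacobian $\det \Sigma^{1/2} = \sqrt{\det \Sigma}$, we have $\cA ( \Sigma^{1/2} h_1 ) = \leb ( \Sigma^{1/2} h_1 ) = \sqrt{\det \Sigma}\; \leb(h_1) = a_1 \sqrt{\det \Sigma}$, exactly as recalled in the discussion preceding Theorem \ref{thm:limit-zero}. This establishes the displayed formula.

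There is no real obstacle here: the substance is all in Theorem \ref{thm:limit-zero} and Lemma \ref{lem:functional-continuity}, and what remains is the routine verification of how $\cL$ and $\cA$ transform under scaling, plus the volume-change formula for $\Sigma^{1/2}$. If anything needs a word of care it is only the remark that $H(X_n) = \hull \{S_0, \dots, S_n\}$ (because the interpolating path $X_n$ traces out line segments joining consecutive lattice points, which do not enlarge the convex hull of the vertices), but this was already observed in the proof of Theorem \ref{thm:limit-zero}.
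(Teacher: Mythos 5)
Your proposal is correct and follows exactly the route the paper intends: Theorem \ref{thm:limit-zero} plus the continuous mapping theorem via the continuity of $\cL$ and $\cA$ from Lemma \ref{lem:functional-continuity}, together with the elementary scaling identities for length and area and the Jacobian factor $\sqrt{\det\Sigma}$. No differences worth noting.
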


\begin{remark}
The distributional limits for $ n^{-1/2} L_n$ and $ n^{-1 } A_n$
in Corollary \ref{cor:zero-limits} are supported on $\RP$ and, as we will show in  Proposition \ref{prop:var_bounds} below, are non-degenerate if $\Sigma$ is positive definite;
hence they are \emph{non-Gaussian} excluding trivial cases.
\end{remark}

In the case $\mu \neq 0$,  the scaling limit 
can be viewed as a space-time trajectory of one-dimensional Brownian motion. Let $w:=( w(s) )_{s \in [0,1]}$ denote standard Brownian motion in $\R$, started at $w(0) = 0$;
similarly to above, we may take $w \in \cC_1^0$.
Define $\tilde b   \in \cC_2^0$ in Cartesian coordinates via
\[ \tilde b (s) =  (  s , w(s) ) , ~ \text{for } s \in [0,1]; \]
thus $\tilde b [0,1]$ is the space-time diagram of one-dimensional Brownian motion run for unit time.
For $t \in [0,1]$, let $\tilde h_t := \hull \tilde b [0,t] \in \cK_2^0$, and define $\tilde a_t := \cA ( \tilde h_t )$.
(Closely related to $\tilde h_t$ is the greatest \emph{convex minorant} of $w$ over $[0,t]$, which is
of interest in its own right, see e.g.~\cite{pr} and references therein.)
  
Suppose $\mu \neq 0$ and $\sperp \in (0,\infty)$. 
Given $\mu \in \R^2 \setminus \{ 0 \}$,
let $\hat \mu_\perp$ be the unit vector perpendicular   to  $\mu$ obtained by rotating $\hat \mu$ by $\pi/2$ anticlockwise.
For $n \in \N$, define $\psi^\mu_n : \R^2 \to \R^2$ by the image of $x \in \R^2$ in Cartesian components:
\[ \psi^\mu_n ( x ) = \left( \frac{x \cdot \hat \mu}{n \| \mu \|  } , \frac{x \cdot \hat \mu_\perp}{ \sqrt { n \sperp }  } \right) .\]
In words, $\psi^\mu_n$ rotates $\R^2$, mapping $\hat \mu$ to the unit vector in the horizontal direction,
and then scales space with a horizontal shrinking factor $\| \mu \| n$ and a vertical factor $ \sqrt { n \sperp } $.

 \begin{theorem}
  \label{thm:limit-drift}  
  Suppose that  $\mu \neq 0$, and $\sperp >0$.
 Then, as $n \to \infty$,
   \[ \psi^\mu_n (  \hull \{ S_0, S_1, \ldots, S_n \} ) \Rightarrow \tilde h_1, \]
   in the sense of weak convergence on $(\cK_2^0 , \rho_H )$.
  \end{theorem}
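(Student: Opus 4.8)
The plan is to mimic the proof of Theorem~\ref{thm:limit-zero}, replacing Donsker's theorem with a scaling statement adapted to the anisotropic map $\psi^\mu_n$, and then invoke the path-to-hull continuity of Lemma~\ref{lem:path-hull} together with the continuous mapping theorem. First I would apply $\psi^\mu_n$ to the interpolating path $X_n$ and compute $\psi^\mu_n(X_n)(t)$ in the rotated coordinate frame with axes $\hat\mu,\hat\mu_\perp$: the first coordinate is $n^{-1}\|\mu\|^{-1}(S_{\lfloor nt\rfloor}\cdot\hat\mu + (nt-\lfloor nt\rfloor)Z_{\lfloor nt\rfloor+1}\cdot\hat\mu)$ and the second is $(n\sperp)^{-1/2}(S_{\lfloor nt\rfloor}\cdot\hat\mu_\perp + (nt-\lfloor nt\rfloor)Z_{\lfloor nt\rfloor+1}\cdot\hat\mu_\perp)$. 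The key claim is that $\psi^\mu_n(X_n)\Rightarrow\tilde b$ on $(\cC_2^0,\rho_\infty)$.

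The two coordinates behave quite differently and this is where the work lies. For the second coordinate, the increments $Z_k\cdot\hat\mu_\perp$ have mean $\mu\cdot\hat\mu_\perp = 0$ and variance $\sperp\in(0,\infty)$, so by the classical Donsker invariance principle the piecewise-linear interpolation of $(n\sperp)^{-1/2}\sum_{k\le \lfloor nt\rfloor} Z_k\cdot\hat\mu_\perp$ converges weakly in $(\cC_1^0,\rho_\infty)$ to standard one-dimensional Brownian motion $w$; the interpolation correction term $(n\sperp)^{-1/2}(nt-\lfloor nt\rfloor)Z_{\lfloor nt\rfloor+1}\cdot\hat\mu_\perp$ is asymptotically negligible in the sup norm by a standard maximal-increment estimate (under \eqref{ass:moments} with $p=2$, $\max_{k\le n}\|Z_k\| = o(\sqrt n)$ in probability). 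For the first coordinate, $Z_k\cdot\hat\mu$ has mean $\|\mu\|$, so $n^{-1}\|\mu\|^{-1}S_{\lfloor nt\rfloor}\cdot\hat\mu = n^{-1}\lfloor nt\rfloor + n^{-1}\|\mu\|^{-1}\sum_{k\le\lfloor nt\rfloor}(Z_k\cdot\hat\mu - \|\mu\|)$; the first piece converges uniformly to the deterministic identity map $t\mapsto t$, and the centred sum, being $O_{\Pr}(\sqrt n)$ uniformly (again by Donsker, or merely by Doob's inequality), contributes $O_{\Pr}(n^{-1/2})$ after the $n^{-1}$ scaling, hence vanishes in $\rho_\infty$. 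Combining, $\psi^\mu_n(X_n)\Rightarrow(t\mapsto(t,w(t)))=\tilde b$; the joint convergence follows since the limit of the first coordinate is deterministic, so one only needs marginal convergence of the second coordinate plus a Slutsky-type argument on $\cC_2^0$.

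Once $\psi^\mu_n(X_n)\Rightarrow\tilde b$ on $(\cC_2^0,\rho_\infty)$ is established, the rest is exactly as in Theorem~\ref{thm:limit-zero}: since $\psi^\mu_n$ is affine and the convex hull commutes with affine maps, $H(\psi^\mu_n(X_n)) = \psi^\mu_n(H(X_n)) = \psi^\mu_n(\hull\{S_0,\ldots,S_n\})$, using that $X_n[0,1]$ is the union of the segments joining consecutive $S_k$ so that $\hull X_n[0,1] = \hull\{S_0,\ldots,S_n\}$. Lemma~\ref{lem:path-hull} gives that $H:(\cC_2^0,\rho_\infty)\to(\cK_2^0,\rho_H)$ is continuous, so the continuous mapping theorem yields $\psi^\mu_n(\hull\{S_0,\ldots,S_n\}) = H(\psi^\mu_n(X_n)) \Rightarrow H(\tilde b) = \hull\tilde b[0,1] = \tilde h_1$ on $(\cK_2^0,\rho_H)$, which is the claim.

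The main obstacle I expect is the first-coordinate analysis: unlike the zero-drift case, the natural scaling is genuinely anisotropic, and one must check carefully that the horizontal fluctuations are killed (they are only $O_{\Pr}(\sqrt n)$, negligible against the $n^{-1}$ scaling) while the deterministic drift survives to produce the $t\mapsto t$ component, and that the interpolation remainder is uniformly small; assembling these into joint weak convergence on $\cC_2^0$ (rather than just coordinatewise) is the step that needs a little care, though it is routine given that one limit coordinate is deterministic. Everything else is a direct transcription of the argument already given for Theorem~\ref{thm:limit-zero}.
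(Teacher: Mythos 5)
Your proposal is correct and follows essentially the same route as the paper: decompose $\psi^\mu_n(X_n)$ into the $\hat\mu$ and $\hat\mu_\perp$ coordinates, show the former converges uniformly to $t\mapsto t$ and apply Donsker to the latter, combine (using that one limit is deterministic), and then conclude via affine invariance of the hull, Lemma~\ref{lem:path-hull}, and the continuous mapping theorem. The only cosmetic difference is that the paper establishes the functional law of large numbers for the drift coordinate from the pointwise strong law, whereas you use a quantitative $O_{\Pr}(\sqrt n)$ fluctuation bound; both are valid under the standing second-moment assumption.
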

\begin{proof}
Observe that $\hat \mu \cdot S_n$ is a random walk on $\R$ with one-step mean drift $\hat \mu \cdot \mu = \| \mu \| \in (0,\infty)$,
while $\hat \mu_\perp \cdot S_n$ is a walk with mean drift $\hat \mu_\perp \cdot \mu = 0$
and increment variance
\[ \Exp \left[ ( \hat \mu_\perp \cdot Z  )^2   \right]
= \Exp \left[ ( \hat \mu_\perp \cdot ( Z - \mu)  )^2   \right]
=  \Exp [ \| Z - \mu \|^2 ] - \Exp [ (\hat \mu \cdot (Z - \mu) )^2 ] = \sigma^2 - \spara = \sperp .\]
According to the strong law of large numbers, for any $\eps>0$ there exists $N_\eps \in \N$ a.s.\ such that
$| m^{-1} \hat \mu \cdot S_m - \| \mu \| | < \eps$ for $m \geq N_\eps$.
Now we have that
\begin{align*} \sup_{N_\eps/n \leq t \leq 1 } \left| \frac{ \hat \mu \cdot S_{\lfloor nt \rfloor}}{n} - t \| \mu \| \right|
& \leq \sup_{N_\eps/n \leq t \leq 1 } \left( \frac{\lfloor nt \rfloor}{n} \right)
\left| \frac{ \hat \mu \cdot S_{\lfloor nt \rfloor}}{\lfloor nt \rfloor} - \| \mu \| \right| + \| \mu \| \sup_{0 \leq t \leq 1 } \left| \frac{\lfloor nt \rfloor}{n} - t \right| \\
& \leq  \sup_{N_\eps/n \leq t \leq 1}
\left| \frac{ \hat \mu \cdot  S_{\lfloor nt \rfloor}}{\lfloor nt \rfloor} - \| \mu \| \right| + \frac{ \| \mu \|}{n} \leq \eps +  \frac{ \| \mu \|}{n}.\end{align*}
On the other hand,
\[ \sup_{0 \leq t \leq N_\eps/n } \left| \frac{ \hat \mu \cdot S_{\lfloor nt \rfloor}}{n} - t \| \mu \| \right|
\leq \frac{1}{n} \max \{  \hat \mu \cdot  S_0,   \ldots,  \hat \mu \cdot S_{N_\eps} \}  + \frac{N_\eps \| \mu \|}{n} \to 0, \as ,\]
since $N_\eps < \infty$ a.s. Combining these last two displays and using the fact that $\eps>0$ was arbitrary, we see that $\sup_{0 \leq t \leq 1}  \left| n^{-1}   \hat \mu \cdot S_{\lfloor nt \rfloor} - t \| \mu \| \right| \to 0$, a.s.\ (the
functional version of the strong law). 
Similarly, $\sup_{0 \leq t \leq 1}  \left| n^{-1}   \hat \mu \cdot S_{\lfloor nt \rfloor +1} - t \| \mu \| \right| \to 0$, a.s.\ as well.
Since $X_n(t)$ interpolates $S_{\lfloor nt \rfloor}$ and  $S_{\lfloor nt \rfloor +1}$, it follows that
 $\sup_{0 \leq t \leq 1}  \left| n^{-1}   \hat \mu \cdot X_n(t)  - t \| \mu \| \right| \to 0$, a.s. In other words,
$(n \|\mu \|)^{-1} X_n \cdot \hat \mu$ converges a.s.\ to the identity function $t \mapsto t$ on $[0,1]$.

For the other component,  Donsker's theorem gives $( n \sperp)^{-1/2} X_n \cdot \hat \mu_\perp \Rightarrow w$ on $(\cC_1^0, \rho_\infty)$.
It follows that,   as $n \to \infty$,
  $\psi^\mu_n (  X_n ) \Rightarrow \tilde b$, 
   on $(\cC_2^0 , \rho_\infty )$. Hence by Lemma \ref{lem:path-hull} and since $\psi_n^\mu$ acts as an affine transformation on $\R^2$,
\[ \psi_n^\mu ( H ( X_n ) ) = H  ( \psi_n^\mu ( X_n  ) ) \Rightarrow H ( \tilde b ) ,\]
on $(\cK_2^0 , \rho_H )$, and the result follows.
\end{proof}

Theorem \ref{thm:limit-drift}  with the continuous mapping theorem, Lemma \ref{lem:functional-continuity},
and the fact that $\cA ( \psi_n^\mu ( A )) = n^{-3/2} \| \mu \|^{-1} ( \sperp )^{-1/2} \cA ( A )$
for measurable $A \subseteq \R^2$,
 implies
 the following distributional limit for $A_n$ in the case $\mu \neq 0$.

\begin{corollary}
\label{cor:A-limit-drift}
 Suppose that  $\mu \neq 0$, and $\sperp >0$.
Then 
   \[ n^{-3/2} A_n \tod \| \mu \| ( \sperp )^{1/2} \tilde a_1 , \text{ as }  n \to \infty .  \]
\end{corollary}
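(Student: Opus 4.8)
The plan is to derive Corollary~\ref{cor:A-limit-drift} from Theorem~\ref{thm:limit-drift} in exactly the way the corollary's preamble advertises: combine the weak convergence statement $\psi^\mu_n( \hull \{ S_0, \ldots, S_n \}) \Rightarrow \tilde h_1$ on $(\cK_2^0, \rho_H)$ with the continuity of the area functional $\cA$ from Lemma~\ref{lem:functional-continuity}, via the continuous mapping theorem, and then unwind the scaling that $\psi^\mu_n$ applies to area. First I would record the effect of $\psi^\mu_n$ on Lebesgue measure. Since $\psi^\mu_n$ is the composition of a rotation (which preserves area) with the diagonal linear map $\mathrm{diag}( (n\|\mu\|)^{-1}, (n\sperp)^{-1/2})$, its Jacobian determinant has absolute value $(n\|\mu\|)^{-1} (n\sperp)^{-1/2} = n^{-3/2} \|\mu\|^{-1} (\sperp)^{-1/2}$, so $\cA( \psi^\mu_n(A)) = n^{-3/2} \|\mu\|^{-1} (\sperp)^{-1/2} \cA(A)$ for every measurable $A \subseteq \R^2$, exactly as stated in the text preceding the corollary.

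Next I would apply the continuous mapping theorem. By Lemma~\ref{lem:functional-continuity}, equation~\eqref{eq:A-comparison}, the map $\cA : (\cK_2^0, \rho_H) \to (\RP, \rho)$ is continuous (it is locally Lipschitz, the modulus involving $\cL$, which is finite on any fixed compact set of $\cK_2^0$ — but continuity alone suffices here). Hence from Theorem~\ref{thm:limit-drift} and the continuous mapping theorem (as cited, e.g.\ \cite[p.~76]{kallenberg}),
\[
\cA\bigl( \psi^\mu_n( \hull \{ S_0, \ldots, S_n\}) \bigr) \tod \cA(\tilde h_1) = \tilde a_1, \quad \text{as } n \to \infty,
\]
convergence of $\R$-valued random variables. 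Combining this with the scaling identity from the previous paragraph, applied to the (random, but a.s.\ convex and compact) set $A = \hull\{S_0, \ldots, S_n\}$, gives
\[
n^{-3/2} \|\mu\|^{-1} (\sperp)^{-1/2} A_n = n^{-3/2} \|\mu\|^{-1} (\sperp)^{-1/2} \cA\bigl( \hull\{S_0,\ldots,S_n\}\bigr) = \cA\bigl( \psi^\mu_n(\hull\{S_0,\ldots,S_n\})\bigr) \tod \tilde a_1.
\]
Multiplying through by the deterministic constant $\|\mu\| (\sperp)^{1/2} \in (0,\infty)$ (legitimate since $\mu \neq 0$ and $\sperp > 0$ by hypothesis) and using that multiplication by a fixed constant is continuous yields $n^{-3/2} A_n \tod \|\mu\| (\sperp)^{1/2} \tilde a_1$, as required.

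There is really no serious obstacle: the corollary is a direct packaging of Theorem~\ref{thm:limit-drift} through two facts already in hand (continuity of $\cA$, and the area-scaling behaviour of the affine map $\psi^\mu_n$). The only points demanding a word of care are (i) noting that $\hull\{S_0,\ldots,S_n\} \in \cK_2^0$ almost surely, so that $\cA$ and the Hausdorff-space convergence genuinely apply — this is immediate since the walk starts at the origin and the hull of finitely many points is compact and convex; and (ii) checking that the scaling constants are strictly positive and finite so that dividing and multiplying by them is harmless, which is exactly where the standing hypotheses $\mu \neq 0$ and $\sperp > 0$ enter. If one wanted to be fully explicit, one could instead observe that $\cA \circ \psi^\mu_n = (n^{-3/2}\|\mu\|^{-1}(\sperp)^{-1/2}) \cdot \cA$ as functionals on $\cK_2^0$ and that the right-hand side, composed with the weak limit, produces the stated law in one step.
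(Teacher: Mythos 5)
Your proof is correct and follows exactly the route the paper takes: Theorem \ref{thm:limit-drift} plus the continuous mapping theorem applied to $\cA$ (continuous by Lemma \ref{lem:functional-continuity}), combined with the Jacobian identity $\cA(\psi^\mu_n(A)) = n^{-3/2}\|\mu\|^{-1}(\sperp)^{-1/2}\cA(A)$. The paper states the corollary as an immediate consequence of precisely these three ingredients, so there is nothing to add.
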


 \begin{remarks}
(i) Only the $\sperp >0$ case is non-trivial, since
$\sperp =0$ if and only if $Z$ is parallel to $\pm \mu$ a.s., in which case all the points
$S_0, \ldots, S_n$ are collinear and $A_n = 0$ a.s.\ for all $n$.\\
(ii)
The limit in Corollary \ref{cor:A-limit-drift} is non-negative and non-degenerate (see Proposition \ref{prop:var_bounds}
below) and hence non-Gaussian.
\end{remarks}

\section{Expectation and variance asymptotics}
\label{sec:asymptotics}

\subsection{Expectation asymptotics}

We start with asymptotics for $\Exp L_n$ and $\Exp A_n$ in the case $\mu =0$. These results,
Propositions \ref{prop:EL-zero} and \ref{prop:EA-zero},
are in part already contained in \cite{sw} and \cite{bnb} respectively; we give concise proofs here
  since several of the computations involved
will be useful later.
The first result, essentially given in \cite[p.\ 508]{sw}, is for $L_n$.

\begin{proposition} \label{prop:EL-zero}
 Suppose that $\mu=0$. Then, for $Y \sim \cN( 0, \Sigma)$,
\[ \lim_{n \to \infty} n^{-1/2}\Exp L_n = \Exp \cL (\Sigma^{1/2} h_1) = 4 \Exp \| Y \|. \]
\end{proposition}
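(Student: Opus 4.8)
The plan is to prove the two equalities in turn. For the outer one, $\lim_{n\to\infty} n^{-1/2}\Exp L_n = 4\Exp\|Y\|$, I would argue directly from the Spitzer--Widom formula \eqref{eq:sw} by a Ces\`aro--Toeplitz argument. For the inner one, $\Exp\cL(\Sigma^{1/2}h_1) = 4\Exp\|Y\|$, I would combine the weak convergence in Corollary~\ref{cor:zero-limits} with a uniform integrability bound furnished by the Snyder--Steele estimate \eqref{eq:ss}; transitivity then closes the chain. (An alternative route to the inner equality is a direct computation on the Brownian hull, sketched below.)

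First I would record the elementary bound: since $\mu=0$, Jensen's inequality gives $\Exp\|S_k\| \le (\Exp\|S_k\|^2)^{1/2} = \sigma\sqrt k$, so by \eqref{eq:sw} and $\sum_{k=1}^n k^{-1/2} \le 2\sqrt n$,
\[ \Exp L_n = 2\sum_{k=1}^n \frac{\Exp\|S_k\|}{k} \le 2\sigma\sum_{k=1}^n k^{-1/2} \le 4\sigma\sqrt n . \]
Next, the multivariate central limit theorem gives $k^{-1/2}S_k \tod Y \sim \cN(0,\Sigma)$, and since $\Exp[\|k^{-1/2}S_k\|^2] = \sigma^2$ is bounded in $k$, the family $\{k^{-1/2}\|S_k\|\}_{k\ge1}$ is uniformly integrable, whence $c_k := k^{-1/2}\Exp\|S_k\| \to \Exp\|Y\|$ as $k\to\infty$. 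Then \eqref{eq:sw} reads
\[ n^{-1/2}\Exp L_n = \frac{2}{\sqrt n}\sum_{k=1}^n \frac{c_k}{\sqrt k} = 2\Bigl(\frac{1}{\sqrt n}\sum_{k=1}^n k^{-1/2}\Bigr)\cdot\frac{\sum_{k=1}^n c_k\,k^{-1/2}}{\sum_{k=1}^n k^{-1/2}} . \]
The first factor tends to $2$ since $\sum_{k=1}^n k^{-1/2} = 2\sqrt n + O(1)$, and the second is a weighted average of the $c_k$ with nonnegative weights $k^{-1/2}/\sum_{j\le n}j^{-1/2}$ that place vanishing total mass on $\{k\le K\}$ as $n\to\infty$ for each fixed $K$; by the Toeplitz lemma it converges to $\lim_k c_k = \Exp\|Y\|$. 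Hence $n^{-1/2}\Exp L_n \to 4\Exp\|Y\|$.

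For the identification with the Brownian hull, Corollary~\ref{cor:zero-limits} gives $n^{-1/2}L_n \tod \cL(\Sigma^{1/2}h_1)$, so it suffices to check that $\{n^{-1/2}L_n\}_{n\ge1}$ is uniformly integrable, and for this it is enough to bound its second moments:
\[ \Exp\bigl[(n^{-1/2}L_n)^2\bigr] = n^{-1}\Var L_n + n^{-1}(\Exp L_n)^2 \le \frac{\pi^2\sigma^2}{2} + 16\sigma^2 , \]
using \eqref{eq:ss} for the first term and the bound $\Exp L_n \le 4\sigma\sqrt n$ above for the second. Thus $\sup_n \Exp[(n^{-1/2}L_n)^2]<\infty$, which gives the required uniform integrability, and therefore $n^{-1/2}\Exp L_n \to \Exp\cL(\Sigma^{1/2}h_1)$; comparison with the previous paragraph yields $\Exp\cL(\Sigma^{1/2}h_1) = 4\Exp\|Y\|$. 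Alternatively, one can evaluate $\Exp\cL(\Sigma^{1/2}h_1)$ directly: Cauchy's formula and affine invariance of support functions give $\cL(\Sigma^{1/2}h_1) = \int_{\Sp_1}\sup_{s\in[0,1]}\bigl((\Sigma^{1/2}e)\cdot b(s)\bigr)\,\ud e$, each inner supremum being the running maximum over $[0,1]$ of a one-dimensional Brownian motion with variance parameter $\|\Sigma^{1/2}e\|^2$, hence with expectation $\sqrt{2/\pi}\,\|\Sigma^{1/2}e\|$ by the reflection principle; writing $Y=\Sigma^{1/2}W$ with $W\sim\cN(0,I)$ in polar coordinates then shows $4\Exp\|Y\| = \sqrt{2/\pi}\int_{\Sp_1}\|\Sigma^{1/2}e\|\,\ud e$ as well.

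The main obstacle is the two interchanges of limit and expectation: establishing $k^{-1/2}\Exp\|S_k\|\to\Exp\|Y\|$, and passing from the distributional limit in Corollary~\ref{cor:zero-limits} to convergence of means. Both hinge on second-moment control — the former on $\Exp\|S_k\|^2 = k\sigma^2$, the latter on \eqref{eq:ss} — so the argument uses the standing $p=2$ moments assumption essentially; the Toeplitz summation and the elementary inequalities are routine.
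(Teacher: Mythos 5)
Your proposal is correct and follows the same two-pronged strategy as the paper: weak convergence from Corollary~\ref{cor:zero-limits} plus uniform integrability gives $\lim_n n^{-1/2}\Exp L_n = \Exp\cL(\Sigma^{1/2}h_1)$, and the Spitzer--Widom formula \eqref{eq:sw} combined with the central limit theorem for $k^{-1/2}\Exp\|S_k\|$ and a summation argument gives the value $4\Exp\|Y\|$. The only (harmless) difference is in how the uniform integrability of $n^{-1/2}L_n$ is certified: you use the Snyder--Steele variance bound \eqref{eq:ss} together with $\Exp L_n \leq 4\sigma\sqrt{n}$, whereas the paper bounds $L_n \leq 2\pi\max_{0\leq k\leq n}\|S_k\|$ via Cauchy's formula and invokes Doob's inequality (Lemma~\ref{lem:walk_moments}(ii)); both yield $\sup_n \Exp[(n^{-1/2}L_n)^2]<\infty$ under the standing second-moment assumption.
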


  Cauchy's formula applied to the line segment from $0$ to $Y$ with Fubini's theorem implies $2 \Exp \| Y \| = \int_{\Sp_1} \Exp [ ( Y \cdot e )^+ ] \ud e$.
Here $Y \cdot e = e^\tra Y$ is univariate normal
with mean $0$ and variance $e^\tra \Sigma e = \|\Sigma^{1/2} e\|^2$,  so that $\Exp[ ( Y \cdot e)^+ ]$ is  
$\|\Sigma^{1/2} e\|$ times one half of the mean of the square-root of a $\chi_1^2$ random variable. Hence
$\Exp \| Y \| = ( 8 \pi)^{-1/2} \int_{\Sp_1} \|\Sigma^{1/2} e\| \ud e$, which 
in general may be expressed via a complete elliptic integral of the second kind
in terms of the ratio of the eigenvalues of $\Sigma$.
In the particular case $\Sigma = I$, $\Exp \| Y \| = \sqrt{\pi / 2}$ so
then Proposition \ref{prop:EL-zero} implies that
\[
\lim_{n \to \infty} n^{-1/2}\Exp L_n = \sqrt{8 \pi}, \]
 matching the formula 
$\Exp \ell_1 = \sqrt{8 \pi}$ of Letac and Tak\'acs \cite{letac,takacs}. 
We also note the bounds
\begin{equation}
\label{EL-bounds}
 \pi^{-1/2} \sqrt{ \trace \Sigma } \leq \Exp \| Y \| \leq \sqrt{ \trace \Sigma } ;\end{equation}
the upper bound here is from Jensen's inequality and the fact that $\Exp [ \| Y\|^2 ] = \trace \Sigma$.
The lower bound in \eqref{EL-bounds} follows from the inequality  
\[ \Exp \| Y \| \geq \sup_{e \in \Sp_1} \Exp | Y \cdot e |
=  \sqrt{ 2/\pi } \sup_{e \in\Sp_1}  ( \Var [ Y \cdot e ] )^{1/2} \]
together with the fact that 
\[ \sup_{e \in \Sp_1} \Var [ Y \cdot e ] 
=  \sup_{e \in \Sp_1} \|\Sigma^{1/2} e\|^2 
=   \| \Sigma^{1/2} \|^2_{\rm op} = \| \Sigma \|_{\rm op} = \lambda_\Sigma  \geq \frac{1}{2} \trace \Sigma  , 
\]
where $\| \blob  \|_{\rm op}$ is the matrix operator norm and $\lambda_\Sigma $ is the largest
eigenvalue  of $\Sigma$;
in statistical terminology, $\lambda_\Sigma$ is the variance of the first principal component associated with $Y$.

In what follows, we make repeated use of the following fact (see e.g.\ \cite[Lemma\ 4.11]{kallenberg}): if
random variables $\zeta ,\zeta_1, \zeta_2 , \ldots $ are such that $\zeta_n \to \zeta$ in distribution and the
$\zeta_n$ are uniformly integrable, then $\Exp \zeta_n \to \Exp \zeta$. 

\begin{proof}[Proof of Proposition \ref{prop:EL-zero}.]
The finite point-set case of Cauchy's formula gives
\begin{equation}
\label{eq:walk-cauchy}
L_n = \int_{\Sp_{1}} \max_{0 \leq k \leq n} ( S_k \cdot e)\ud e \leq 2 \pi \max_{0 \leq k \leq n} \| S_k \|.\end{equation}
Then by Lemma \ref{lem:walk_moments}(ii)  we have $\sup_n \Exp [ ( n^{-1/2} L_n )^{2} ] < \infty$.
Hence $n^{-1/2} L_n$ is uniformly integrable, so that Theorem \ref{thm:limit-zero} yields
$\lim_{n \to \infty} n^{-1/2}\Exp L_n = \Exp \cL ( \Sigma^{1/2} h_1 )$.

It remains to show that $\lim_{n \to \infty} n^{-1/2}\Exp L_n = 4 \Exp \| Y \|$. One can use Cauchy's formula to compute
$\Exp \cL ( \Sigma^{1/2} h_1 )$; instead we give a direct random walk argument, following \cite{sw}.
The central limit theorem for $S_n$ implies that  
 $n^{-1/2} \|S_n\| \to \|Y\|$ in distribution. Under the given conditions, $\Exp [ \|S_{n+1}\|^2 ] = \Exp [ \|S_{n}\|^2 ] + \Exp [ \|Z \|^2 ]$,
so that $\Exp [ \| S_n \|^2 ] = O(n)$. It follows that $n^{-1/2} \|S_n\|$ is uniformly integrable,
and hence  $\lim_{n \to \infty} n^{-1/2} \Exp \|S_n\| = \Exp \|Y\|$.
The result now follows from some standard analysis based on \eqref{eq:sw} and 
the fact that $\lim_{n \to \infty} n^{-1/2} \sum_{k=1}^n k^{-1/2} =2$.
\end{proof}
 
Now we move on to the area $A_n$. First we state some useful moments bounds.

\begin{lemma} 
\label{lem:A_moments}
Let $p \geq 1$. Suppose that $\Exp [ \| Z \|^{2p} ] < \infty$. 
\begin{itemize}
\item[(i)] We have $\Exp [ A_n^p ] = O ( n^{3p/2} )$.
\item[(ii)] Moreover, if $\mu =0$ we have $\Exp [ A_n^p ] = O (n^{p} )$.
\end{itemize}
\end{lemma}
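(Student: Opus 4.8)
The plan is to bound $A_n$ geometrically by the perimeter length and diameter of the convex hull, and then invoke the moment bounds for random walk maxima collected in the appendix (Lemma~\ref{lem:walk_moments}). The key elementary fact is that any planar convex compact set $K$ satisfies $\cA(K) \leq \frac14 \cL(K)^2$ by the isoperimetric inequality, or more simply $\cA(K) \leq \mathrm{diam}(K)\cdot \mathrm{width}(K) \leq \mathrm{diam}(K)^2$; either way, $\cA(K) \leq C \, \mathrm{diam}(K)^2$ for a universal constant $C$. For $K = \hull\cS_n$ we have $\mathrm{diam}(K) \leq 2 \max_{0 \leq k \leq n} \|S_k\|$, since every extreme point of $K$ is some $S_k$ and each such point lies within $\max_k\|S_k\|$ of the origin. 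Hence $A_n \leq C' M_n^2$ where $M_n := \max_{0 \leq k \leq n}\|S_k\|$, and therefore $\Exp[A_n^p] \leq C'^p \, \Exp[M_n^{2p}]$.

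First I would record the deterministic inequality $A_n \le C M_n^2$ with $M_n := \max_{0\le k\le n}\|S_k\|$, justified as above. Then part~(i): under $\Exp[\|Z\|^{2p}]<\infty$ we apply the moment bound for $M_n$ from Lemma~\ref{lem:walk_moments} in the appendix, which (in the generic drift case) gives $\Exp[M_n^{2p}] = O(n^{2p})$, since $M_n$ is of order $n$ when $\mu \neq 0$ and of order $\sqrt n$ when $\mu = 0$. Wait --- this would give $\Exp[A_n^p] = O(n^{2p})$, which is weaker than the claimed $O(n^{3p/2})$; so the crude bound $A_n \le C M_n^2$ is \emph{not} good enough for part~(i). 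Instead, for part~(i) I would use $\cA(K) \le \mathrm{diam}(K)\cdot\mathrm{width}(K)$ with the width controlled by the fluctuations transverse to the drift: writing $e_\perp$ for a unit vector perpendicular to $\mu$ (or any fixed direction if $\mu=0$), the width of $\hull\cS_n$ in the direction $e_\perp$ is at most $2\max_{0\le k\le n}|e_\perp\cdot S_k|$, and $e_\perp\cdot S_k$ is a zero-drift walk with increment variance at most $\sigma^2$. So $\mathrm{width} \le 2\max_k |e_\perp\cdot S_k|$ has $2p$-th moment $O(n^p)$, while $\mathrm{diam} \le 2M_n$ has $2p$-th moment $O(n^{2p})$; Cauchy--Schwarz then gives $\Exp[A_n^p] \le \Exp[(2M_n)^{2p}]^{1/2}\,\Exp[(2\max_k|e_\perp\cdot S_k|)^{2p}]^{1/2} = O(n^{p})\cdot O(n^{p/2}) = O(n^{3p/2})$, as required.

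For part~(ii), when $\mu = 0$ both the diameter and all directional widths of $\hull\cS_n$ are of order $\sqrt n$: the crude bound $A_n \le C M_n^2$ now suffices, since Lemma~\ref{lem:walk_moments} gives $\Exp[M_n^{2p}] = O(n^{p})$ in the zero-drift case (this is just the $L^{2p}$ maximal inequality / Doob's inequality applied to the martingale $S_k$, valid because $\Exp[\|Z\|^{2p}]<\infty$). Hence $\Exp[A_n^p] \le C'^p\,\Exp[M_n^{2p}] = O(n^p)$.

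\textbf{Main obstacle.} The one subtlety is getting the exponent $3p/2$ rather than $2p$ in part~(i): the naive $A_n \lesssim M_n^2$ bound loses a factor, so one must separate the ``long'' direction (along the drift, fluctuations $\Theta(n)$) from the ``short'' direction (transverse, fluctuations $\Theta(\sqrt n)$) and bound the area by their product, then combine the two moment estimates via Cauchy--Schwarz (or Hölder). Everything else reduces to the maximal moment inequalities for random walks that are proved in the appendix, so the argument is short once that decomposition is in place.
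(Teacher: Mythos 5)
Your proposal is correct and follows essentially the same route as the paper: the zero-drift case via containment in the disk of radius $\max_{0\le k\le n}\|S_k\|$ together with Lemma~\ref{lem:walk_moments}(ii), and the drift case via a bounding rectangle with one side along $\hat\mu$ (moments $O(n^{2p})$) and one along $\hat\mu_\perp$ (moments $O(n^{p})$ since that projection is a zero-drift walk), combined by Cauchy--Schwarz to get $O(n^{3p/2})$. Your observation that the crude bound $A_n\lesssim M_n^2$ is insufficient for part~(i) is exactly the point, and your fix is the paper's argument.
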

\begin{proof}
First we prove (ii). Since   $\hull \{ S_0, \ldots, S_n\}$ 
is contained in the disk of radius $\max_{0 \leq m \leq n} \|S_m\|$ and centre $0$, we have
$A_n^p \leq \pi^p \max_{0 \leq m \leq n} \|S_m\|^{2p}$. Lemma \ref{lem:walk_moments}(ii) then yields part (ii).  
For part (i), it suffices to suppose $\mu \neq 0$. Then, bounding the convex hull by  a rectangle,
\begin{align*}
A_n
& \leq 
\left(\max_{0\leq m \leq n} S_m \cdot \hat \mu - \min_{0\leq m \leq n} S_m \cdot \hat \mu \right) \left(\max_{0\leq m \leq n} S_m \cdot \hat \mu_\perp - \min_{0\leq m \leq n} S_m \cdot \hat \mu_\perp \right)\\
& \leq 
4 \left(\max_{0\leq m \leq n} |S_m \cdot \hat \mu| \right) \left(\max_{0\leq m \leq n} |S_m \cdot \hat \mu_\perp| \right) .
\end{align*}
Hence, by the Cauchy--Schwarz inequality, we have
\[ \Exp [ A_n^p ] \leq 4^p \left( \Exp \left[ \max_{0\leq m \leq n} |S_m \cdot \hat \mu|^{2p} \right] \right)^{1/2} 
\left(\Exp \left[ \max_{0\leq m \leq n} |S_m \cdot \hat \mu_\perp|^{2p} \right] \right)^{1/2} .\]
Now an application of Lemma  \ref{lem:walk_moments}(i) and (iii) gives part (i).
\end{proof}

The asymptotics for $\Exp A_n$ in the case $\mu =0$ are given in the following result,
which is in part contained in \cite[p.\ 325]{bnb}.

\begin{proposition} 
\label{prop:EA-zero}
 Suppose that   $\mu=0$. 
Then, 
\[ \lim_{n \to \infty} n^{-1}\Exp A_n = \frac{\pi}{2} \sqrt{ \det \Sigma } . \]
\end{proposition}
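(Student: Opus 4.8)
The plan is to combine the weak convergence from Corollary~\ref{cor:zero-limits}, which already gives $n^{-1} A_n \tod a_1 \sqrt{\det\Sigma}$, with a uniform integrability argument and a direct computation of $\Exp a_1$. First I would note that Lemma~\ref{lem:A_moments}(ii) with $p=2$ gives $\sup_n \Exp[(n^{-1}A_n)^2] < \infty$ (using the standing $p=2$ moments assumption), so the family $n^{-1}A_n$ is uniformly integrable; hence from Corollary~\ref{cor:zero-limits} and the fact recalled before the proof of Proposition~\ref{prop:EL-zero} we get $\lim_{n\to\infty} n^{-1}\Exp A_n = \Exp \cA(\Sigma^{1/2} h_1) = \sqrt{\det\Sigma}\,\Exp a_1$. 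It then remains to show $\Exp a_1 = \pi/2$.

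To compute $\Exp a_1$, I would follow the random-walk route suggested by the reference to \cite{bnb} and the remark that the intermediate computations will be reused. The cleanest approach is an integral-geometric one: by the same kind of formula underlying Cauchy's formula, the area of a convex set $A \in \cK_2^0$ can be written as $\cA(A) = \frac{1}{2}\int_{\Sp_1} h_A(e)\, w_A(e)\, \ud e$ for a suitable ``width''-type weight, or more directly one expresses $\Exp A_n$ (for the zero-drift walk) via the decomposition of the hull into triangles hinged at the origin and a second-order Spitzer--Widom-type identity; passing to the limit then yields $\Exp a_1$. Alternatively, and perhaps more in keeping with the ``direct random walk argument'' style of the Proposition~\ref{prop:EL-zero} proof, I would use the known representation $\Exp A_n$ in terms of $\Exp[\text{signed area of triangles } (S_i,S_j,S_k)]$ and reduce it, via independence of increments, to sums involving $\Exp\|S_k\|$ or pairwise quantities; the $n\to\infty$ scaling then picks out a Riemann-sum limit. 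In the isotropic case $\Sigma = I$ this recovers the classical value $\Exp a_1 = \pi/2$ of \cite{letac,takacs}, and the general case follows by applying the area scaling $\cA(\Sigma^{1/2}A) = \sqrt{\det\Sigma}\,\cA(A)$ already recorded in Section~\ref{sec:Brownian-hulls}.

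The main obstacle I anticipate is the exact evaluation of $\Exp a_1 = \pi/2$ (equivalently $\lim_n n^{-1}\Exp A_n = \pi/2$ when $\Sigma = I$): unlike the perimeter case, where the Spitzer--Widom formula \eqref{eq:sw} plus the elementary fact $n^{-1/2}\sum_{k=1}^n k^{-1/2}\to 2$ does the job almost immediately, the area requires either quoting/reproving the Barndorff-Nielsen--Baxter-type formula of \cite{bnb} (a non-trivial identity for $\Exp A_n$ of a planar random walk) or invoking the Brownian result $\Exp a_1 = \pi/2$ from \cite{letac,takacs} directly. Since the proposition explicitly cites \cite{bnb}, the intended proof almost certainly quotes their formula for $n^{-1}\Exp A_n$ (valid for zero-drift walks under a second-moment assumption) and passes to the limit, with the uniform-integrability step above supplying the link to the scaling limit $a_1$; the only genuinely new content is thus the identification $\Exp\cA(\Sigma^{1/2}h_1) = \sqrt{\det\Sigma}\,\Exp a_1$, which is routine given the affine-scaling property of Lebesgue measure.
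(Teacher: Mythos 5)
Your first step has a genuine gap. You invoke Lemma~\ref{lem:A_moments}(ii) ``with $p=2$'' to get $\sup_n \Exp[(n^{-1}A_n)^2]<\infty$ under the standing second-moment assumption, but the $p$ in that lemma is not the $p$ of \eqref{ass:moments}: the hypothesis of the lemma is $\Exp[\|Z\|^{2p}]<\infty$, so obtaining $\Exp[A_n^2]=O(n^2)$ requires finite \emph{fourth} moments, and more generally any exponent $q>1$ giving uniform integrability of $n^{-1}A_n$ via that lemma requires \eqref{ass:moments} for some $p>2$. Under only second moments the lemma yields $\Exp[A_n]=O(n)$, i.e.\ $L^1$-boundedness, which does not give uniform integrability. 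The paper flags exactly this point in the remark following the proposition statement: the ``na\"ive approach'' through Corollary~\ref{cor:zero-limits} plus $\Exp a_1=\pi/2$ seems to need a stronger moment condition, which is why Proposition~\ref{prop:EA-zero} is \emph{not} proved that way.

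The paper's actual route is the one you mention only in passing at the end: it works entirely with the Barndorff-Nielsen--Baxter identity \eqref{eq:bnb}, $\Exp A_n=\sum_{k=2}^n\sum_{m=1}^{k-1}\Exp T(S_m,S_k-S_m)/(m(k-m))$, and applies the uniform-integrability argument not to $n^{-1}A_n$ but to the normalized triangle areas $T(S_m,S_k-S_m)/\sqrt{m(k-m)}$. Since $T(u,v)\le\|u\|\|v\|$ and $S_m$, $S_k-S_m$ are independent, their second moments are controlled by second moments of $Z$ alone, so one gets $\Exp T(S_m,S_k-S_m)/\sqrt{m(k-m)}\to\Exp T(Y_1,Y_2)=\tfrac12\sqrt{\det\Sigma}$ (computed explicitly via $\Exp\|W_1\|=\sqrt{\pi/2}$ and $\Exp\sin\Theta=2/\pi$), and the conclusion follows from the Riemann-sum limit $\sum_{m=1}^{k-1}m^{-1/2}(k-m)^{-1/2}\to\pi$ and a Ces\`aro argument; no appeal to Theorem~\ref{thm:limit-zero} or to $\Exp a_1=\pi/2$ is needed. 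Your proposal would be salvageable either by strengthening the hypothesis to \eqref{ass:moments} for some $p>2$ (changing the statement), or by switching to the paper's argument, in which case you must also commit to a concrete evaluation of the limiting constant rather than the several unelaborated alternatives you sketch.
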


Given Theorem \ref{thm:limit-zero}, one may also deduce the limit result in Proposition \ref{prop:EA-zero}
from the formula $\Exp a_1 = \frac{\pi}{2}$ of El Bachir \cite[p.~66]{elbachir} with a uniform integrability argument; however, the na\"ive approach
seems to require a slightly stronger moments assumption, such as \eqref{ass:moments} for some $p >2$ (cf Lemma \ref{lem:A_moments}).
The proof of Proposition \ref{prop:EA-zero}
is based on an  analogue for $\Exp A_n$ of the Spitzer--Widom formula, due to Barndorff-Nielsen and Baxter \cite{bnb}.
To state the formula, 
let $T(u,v)$ ($u,v \in \R^2$) be the area of a triangle with sides of $u,v$ and $u + v$. 
Note that for $\alpha, \beta >0$, $T(\alpha u, \beta v) = \alpha \beta T(u,v)$.
The formula of \cite{bnb} states
\begin{equation} 
\label{eq:bnb}
\Exp A_n = \sum_{k=2}^n \sum_{m=1}^{k-1} \frac{\Exp  T(S_m,S_k-S_m) }{m(k-m)} .
\end{equation}

\begin{proof}[Proof of Proposition \ref{prop:EA-zero}.]
First we show that, under the given conditions,
\begin{equation}
\label{eq:triangle_mean}
 \lim_{m \to \infty, \, k-m \to \infty} \frac{\Exp T(S_m, S_k-S_m)}{\sqrt{m(k-m)}} = \Exp T(Y_1,Y_2) , \end{equation}
where $Y_1$ and $Y_2$ are independent $\cN(0, \Sigma)$ random vectors.
Indeed, it follows from the central limit theorem in $\R^2$ 
and the continuity of $T$ that
\[ \frac{T(S_m, S_k-S_m)}{\sqrt{m(k-m)}} = T \left( \frac{S_m}{\sqrt{m}}, \frac{S_k-S_m}{\sqrt{k-m}} \right) \tod  T(Y_1,Y_2) , \]
  as $m \to \infty$ and $k-m \to \infty$. Moreover, $T(u, v) \leq \| u \| \| v\|$ so
\begin{align*}
\Exp \left[ \left( \frac{T(S_m, S_k-S_m)}{\sqrt{m(k-m)}} \right)^2 \right] 
& \leq \frac{\Exp [ \|S_m\|^2 \|S_k-S_m\|^2 ] }{m(k-m)} \\
& \leq \frac{\Exp [ \|S_m\|^2 ]}{m} \cdot\frac{\Exp [ \|S_k-S_m\|^2 ]}{k-m} ,
\end{align*}
which is uniformly bounded for $k \geq m+1 \geq 0$, by Lemma \ref{lem:walk_moments}.
It follows that $m^{-1/2}(k-m)^{-1/2}T(S_m,S_k - S_m)$ is uniformly integrable over $(m, k)$ with $m \geq 1$, $k \geq m+1$,
and the claim \eqref{eq:triangle_mean} follows.

With $\Sigma = (\Sigma^{1/2})^2$, we have that $(Y_1, Y_2)$ is equal in distribution to $(\Sigma^{1/2} W_1, \Sigma^{1/2} W_2)$
where $W_1$ and $W_2$ are independent $\cN (0, I)$ random vectors. Since $\Sigma^{1/2}$ acts as a linear transformation on $\R^2$
with Jacobian $\sqrt{ \det \Sigma}$,
\[ \Exp T(Y_1,Y_2) = \Exp T (\Sigma^{1/2} W_1, \Sigma^{1/2} W_2) = \sqrt{ \det \Sigma } \Exp T (W_1, W_2 ) .\]
 Here $\Exp T (W_1, W_2 ) = \frac{1}{2} \Exp [ \| W_1 \| \| W_2 \| \sin \Theta ]$,
where the minimum angle $\Theta$ between $W_1$ and $W_2$ is uniform on $[0, \pi]$, and $(\| W_1\|, \|W_2\|, \Theta)$
are independent. Hence  $\Exp T (W_1, W_2 ) = \frac{1}{2} ( \Exp \| W_1 \| )^2 ( \Exp \sin \Theta ) = \frac12$,
using the fact that $\Exp \sin \Theta = 2/\pi$ and $\| W_1 \|$ is the square-root of a $\chi_2^2$ random variable, so $\Exp \| W_1 \| = \sqrt{ \pi/2}$.

Thus from 
\eqref{eq:bnb}, \eqref{eq:triangle_mean}, and the computation $\Exp T(Y_1,Y_2) = \frac{1}{2} \sqrt{ \det \Sigma }$, we have
\begin{equation}
\label{eq:EA-1}
 \Exp A_n = \frac{1}{2} \sqrt{ \det \Sigma } \sum_{k=2}^n \sum_{m=1}^{k-1}  m^{-1/2} (k-m)^{-1/2} \left( 1 + \eps_{k,m} \right) ,\end{equation}
where, for any $\eps >0$, there exists $m_0 \in \N$ such that
 $| \eps_{k,m} | \leq \eps$ for all $m \geq m_0$ and $k-m \geq m_0$. 
Moreover,
\begin{equation}
\label{eq:pi-sum}
 \lim_{k \to \infty} \sum_{m=1}^{k-1} m^{-1/2} (k-m)^{-1/2} 
 =  
 \int_0^1 y^{-1/2} (1-y)^{-1/2} \ud y = \pi ,\end{equation}
so that the corresponding Ces\`aro limit also satisfies
\[ 
\lim_{n \to \infty} \frac{1}{n} \sum_{k=2}^n \sum_{m=1}^{k-1} m^{-1/2} (k-m)^{-1/2} = \pi . \]
With \eqref{eq:EA-1} it follows that, for any $\eps >0$,
\[ n^{-1} \Exp A_n \leq  \frac{\pi}{2} (1+ \eps) \sqrt{ \det \Sigma }  + O (n^{-1/2} ) ,\]
which gives $\limsup_{n \to \infty} n^{-1} \Exp A_n \leq \frac{\pi}{2} \sqrt{ \det \Sigma }$,
and a similar argument gives the corresponding
$\liminf$ result.
\end{proof}
 
Next we move on to the case $\mu \neq 0$. The following result on the asymptotics of $\Exp A_n$ in this
 case is, as far as we are aware, 
 new.
 
\begin{proposition} 
\label{prop:EA-drift}
 Suppose that \eqref{ass:moments} holds for some $p > 2$, $\mu \neq 0$,
and $\sperp >0$.
 Then 
\[ \lim_{n \to \infty} n^{-3/2} \Exp A_n = \| \mu \|  ( \sperp)^{1/2} \Exp \tilde a_1 = \frac{1}{3} \| \mu \|  \sqrt{2\pi \sperp} . \]
In particular,  $\Exp \tilde a_1 = \frac{1}{3} \sqrt{2 \pi}$.
\end{proposition}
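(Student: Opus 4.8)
The plan is to prove the two displayed equalities separately and then read off the value of $\Exp\tilde a_1$ by comparing them. The first equality, $\lim_n n^{-3/2}\Exp A_n=\|\mu\|(\sperp)^{1/2}\Exp\tilde a_1$, I would obtain from the distributional convergence $n^{-3/2}A_n\tod\|\mu\|(\sperp)^{1/2}\tilde a_1$ of Corollary \ref{cor:A-limit-drift} together with uniform integrability of $\{n^{-3/2}A_n\}_{n\geq1}$, invoking the standard fact (recalled just before Proposition \ref{prop:EL-zero}) that distributional convergence plus uniform integrability gives convergence of means. Uniform integrability is immediate from Lemma \ref{lem:A_moments}(i) applied with exponent $p/2$: since \eqref{ass:moments} holds for some $p>2$, Lemma \ref{lem:A_moments}(i) gives $\Exp[A_n^{p/2}]=O(n^{3p/4})$, whence $\sup_n\Exp[(n^{-3/2}A_n)^{p/2}]<\infty$ with $p/2>1$.

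The substance is the second equality, $\lim_n n^{-3/2}\Exp A_n=\tfrac13\|\mu\|\sqrt{2\pi\sperp}$, for which I would run the Barndorff-Nielsen--Baxter formula \eqref{eq:bnb} along the lines of the proof of Proposition \ref{prop:EA-zero}, but with the scaling adapted to the drift. Work in the positively oriented orthonormal frame $(\hat\mu,\hat\mu_\perp)$, write $U_j:=S_j\cdot\hat\mu$ and $V_j:=S_j\cdot\hat\mu_\perp$, and decompose $U_j=j\|\mu\|+\widetilde U_j$, so that $\widetilde U_j$ and $V_j$ are mean-zero random walks with per-step variances $\spara$ and $\sperp$. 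Since $T(u,v)=\tfrac12|u_1v_2-u_2v_1|$, one has
\[ T(S_m,S_k-S_m)=\tfrac12\bigl|U_mV_k-V_mU_k\bigr|=\tfrac12\bigl|\,\|\mu\|(mV_k-kV_m)+(\widetilde U_mV_k-V_m\widetilde U_k)\,\bigr| . \]
The first term dominates: writing $\xi_i:=V_i-V_{i-1}$, one has $mV_k-kV_m=(m-k)\sum_{i\leq m}\xi_i+m\sum_{m<i\leq k}\xi_i$, a sum of independent mean-zero terms of total variance $\sperp\,mk(k-m)$ whose largest summand is of order $o\bigl(\sqrt{mk(k-m)}\bigr)$, so the Lindeberg--Feller theorem gives $(mk(k-m))^{-1/2}(mV_k-kV_m)\to\sqrt{\sperp}\,N$ in distribution as $m\to\infty$ and $k-m\to\infty$, with $N\sim\cN(0,1)$; meanwhile Cauchy--Schwarz gives $(mk(k-m))^{-1/2}\Exp|\widetilde U_mV_k-V_m\widetilde U_k|\leq 2\sqrt{\spara\sperp/(k-m)}\to0$. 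Hence $(mk(k-m))^{-1/2}T(S_m,S_k-S_m)\to\tfrac12\|\mu\|\sqrt{\sperp}\,|N|$ in distribution. A matching uniform bound comes from the same splitting: $(\sperp mk(k-m))^{-1/2}|mV_k-kV_m|$ has second moment exactly $1$, and (using $k-m\geq1$) each cross term is at most $(|\widetilde U_m|/\sqrt m)(|V_k|/\sqrt k)$ or $(|V_m|/\sqrt m)(|\widetilde U_k|/\sqrt k)$, which have uniformly bounded $L^{p/2}$-norm by the Marcinkiewicz--Zygmund inequalities $\|\widetilde U_m\|_p,\|V_m\|_p=O(\sqrt m)$ — this is where the hypothesis $p>2$ enters, so that the controlling exponent $p/2$ exceeds $1$. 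Thus $\{(mk(k-m))^{-1/2}T(S_m,S_k-S_m)\}$ is uniformly integrable over $m\geq1$, $k\geq m+1$, giving
\[ \lim_{m\to\infty,\,k-m\to\infty}\frac{\Exp T(S_m,S_k-S_m)}{\sqrt{mk(k-m)}}=\tfrac12\|\mu\|\sqrt{\sperp}\,\Exp|N|=\frac{\|\mu\|}{2}\sqrt{\frac{2\sperp}{\pi}}=:c, \]
while the same decomposition, using only second moments, gives $\Exp T(S_m,S_k-S_m)\leq C\sqrt{mk(k-m)}$ for all $m\geq1$, $k\geq m+1$, for some constant $C$.

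With $c_{m,k}:=\Exp T(S_m,S_k-S_m)/\sqrt{mk(k-m)}$ (so $c_{m,k}\to c$ as $m,k-m\to\infty$, and $c_{m,k}\leq C$), the inner sum of \eqref{eq:bnb} equals $\sqrt k\sum_{m=1}^{k-1}c_{m,k}/\sqrt{m(k-m)}$. The edge terms with $m<m_0$ or $k-m<m_0$ contribute $O(\sqrt{m_0/k})$ to that sum (since $\sum_{m<m_0}(m(k-m))^{-1/2}=O(\sqrt{m_0/k})$ and $c_{m,k}\leq C$), while on the bulk $c_{m,k}$ is within $\eps$ of $c$ once $m_0$ is large, so \eqref{eq:pi-sum} yields $\sum_{m=1}^{k-1}c_{m,k}/\sqrt{m(k-m)}\to c\pi$ as $k\to\infty$, uniformly bounded in $k$. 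Since $n^{-3/2}\sum_{k=2}^n\sqrt k\to\int_0^1\sqrt x\,\ud x=\tfrac23$, a weighted Ces\`aro (Toeplitz) argument then gives $n^{-3/2}\Exp A_n=n^{-3/2}\sum_{k=2}^n\sqrt k\bigl(c\pi+o(1)\bigr)\to\tfrac23 c\pi=\tfrac{\|\mu\|}{3}\sqrt{2\pi\sperp}$; comparing with the first equality forces $\Exp\tilde a_1=\tfrac13\sqrt{2\pi}$.

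The main obstacle is the uniform control of $\Exp T(S_m,S_k-S_m)$. The naive bound $T(u,v)\leq\|u\|\|v\|$ that suffices in the zero-drift case is far too lossy here: it only yields $\Exp T(S_m,S_k-S_m)=O(m(k-m))$, which exceeds the true order $\sqrt{mk(k-m)}$ by a factor up to $\sqrt k$ and is useless after division by $n^{3/2}$. One must instead exploit the determinant structure of $T$, where the cancellation in $U_mV_k-V_mU_k$ brings the scale down to $\sqrt{mk(k-m)}$; and the moment hypothesis $p>2$ is needed precisely to turn the resulting $L^1$ estimate into the uniform integrability that pins down the constant $c$.
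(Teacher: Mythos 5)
Your argument is correct, and the first half (uniform integrability of $n^{-3/2}A_n$ via Lemma \ref{lem:A_moments}(i) plus Corollary \ref{cor:A-limit-drift}) coincides with the paper's. Where you diverge is in pinning down the constant $\frac13\sqrt{2\pi}$: you run the Barndorff-Nielsen--Baxter formula \eqref{eq:bnb} for the \emph{general} walk, exploiting the cancellation in $U_mV_k-V_mU_k$ to get a Lindeberg--Feller limit $\tfrac12\|\mu\|\sqrt{\sperp}\,|N|$ at scale $\sqrt{mk(k-m)}$, a uniform $L^{p/2}$ bound for uniform integrability, and a weighted Ces\`aro summation. The paper instead short-circuits all of this: having already established the universal identity $\lim_n n^{-3/2}\Exp A_n=\|\mu\|(\sperp)^{1/2}\Exp\tilde a_1$, it is free to evaluate $\Exp\tilde a_1$ on any single convenient walk, and chooses $Z=(1,\xi)$ with $\xi\sim\cN(0,1)$, for which $(k-m)\sum_{j\le m}\xi_j-m\sum_{m<j\le k}\xi_j$ is \emph{exactly} distributed as $\xi\sqrt{km(k-m)}$, so \eqref{eq:bnb} becomes a closed-form sum and no CLT, no uniform-integrability estimate, and no Toeplitz argument are needed. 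Your route buys a self-contained proof of the second displayed equality that does not pass through the Gaussian special case (and your diagnosis that the naive bound $T(u,v)\le\|u\|\|v\|$ is off by a factor of order $\sqrt k$ in the drift case is exactly right — this is why the determinant structure must be used); the paper's route buys brevity by letting the universality of the scaling limit do that work. Both are sound; yours is the harder but more informative computation.
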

\begin{proof}
Given  $\Exp [ \|Z_1\|^p ] < \infty$ for some $p >2$, Lemma \ref{lem:A_moments}(i) shows that
$\Exp [ A_n^{p/2} ] = O ( n^{3p/4} )$, so that 
$\Exp [ ( n^{-3/2} A_n )^{p/2} ]$
is uniformly bounded. Hence $ n^{-3/2} A_n$ is uniformly integrable,
so Corollary \ref{cor:A-limit-drift} implies that 
\begin{equation}
\label{eq:EA-scaling-drift}
 \lim_{n \to \infty} n^{-3/2} \Exp A_n = \| \mu \|  ( \sperp)^{1/2} \Exp \tilde a_1.
\end{equation}

In light of \eqref{eq:EA-scaling-drift},    it 
remains to identify $\Exp \tilde a_1= \frac{1}{3} \sqrt{2 \pi}$. It does not seem straightforward to work directly with the Brownian limit;
it turns out again to be
simpler to work with a suitable random walk.
We choose a walk that is particularly convenient for
computations.

Let $\xi \sim \cN (0,1)$ be a standard normal random variable,
and take $Z$ to be distributed as $Z = ( 1, \xi )$ in Cartesian coordinates. Then $S_n = ( n , \sum_{k=1}^n \xi_k )$ is
the space-time diagram of the symmetric random walk on $\R$ generated by i.i.d.\ copies $\xi_1, \xi_2, \ldots$ of $\xi$.

For $Z = (1, \xi)$, $\mu = (1,0)$ and $\sigma^2 = \sperp = \Exp [ \xi^2 ] = 1$. Thus 
by \eqref{eq:EA-scaling-drift}, to complete the proof of Proposition \ref{prop:EA-drift}
it suffices to show  that for this walk
$\lim_{n\to \infty} n^{-3/2} \Exp A_n = \frac{1}{3} \sqrt{2 \pi}$.
If $u, v \in \R^2$ have Cartesian components $u = (u_1, u_2)$ and $v = (v_1, v_2)$, then we
may write $T ( u, v) = \frac{1}{2} | u_1 v_2 - v_1 u_2 |$. Hence
\begin{align*}
T ( S_m, S_k - S_m ) & = \frac{1}{2} \left| ( k-m) \sum_{j=1}^m \xi_j - m \sum_{j=m+1}^k \xi_j \right| .\end{align*}
By properties of the normal distribution, the right-hand side of the last display has the same distribution as $ \frac{1}{2} | \xi  \sqrt{ k m (k-m)}  |$.
Hence
\[ \frac{\Exp T ( S_m, S_k - S_m )}{\sqrt{ m (k-m) } } = \frac{1}{2} \Exp |  \xi  \sqrt{k} | = \frac{1}{2} \sqrt{ 2 k / \pi } ,\]
using the fact that $| \xi |$ is distributed as the square-root of a $\chi_1^2$ random variable, so $\Exp | \xi | = \sqrt{ 2 / \pi }$.
Hence, by \eqref{eq:bnb}, this random walk enjoys the exact formula
\begin{align*} \Exp A_n & = \frac{1}{\sqrt{2\pi}} \sum_{k=2}^n \sum_{m=1}^{k-1} \frac{\sqrt{k}}{\sqrt{ m (k-m) }} . \end{align*}
Then from \eqref{eq:pi-sum} we obtain
$\Exp A_n \sim   \sqrt{\pi /2} \sum_{k=2}^n k^{1/2}$, which gives the result.
\end{proof}

\subsection{Variance asymptotics}

We are now able to give formally the results quoted in \eqref{eq:three_vars}, and to explain the constants that appear in the limits.
Indeed,
these are defined to be 
\begin{equation}
\label{eq:var_constants}
 u_0 ( \Sigma ) := \Var \cL ( \Sigma^{1/2} h_1 ) , ~~~
 v_0 := \Var a_1, ~~~ 
 v_+ := \Var \tilde a_1 .
\end{equation}

\begin{proposition}
\label{prop:var-limit-zero}
 Suppose that \eqref{ass:moments} holds for some $p > 2$, 
and $\mu =0$. Then
\[ \lim_{n \to \infty} n^{-1} \Var L_n = u_0 ( \Sigma ).\]
If, in addition, \eqref{ass:moments} holds for some $p > 4$, then
\[ \lim_{n \to \infty} n^{-2} \Var A_n = v_0 \det \Sigma.\]
\end{proposition}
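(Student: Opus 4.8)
The plan is to mimic the structure of the proofs of Propositions~\ref{prop:EL-zero} and \ref{prop:EA-zero}: combine the weak-convergence statement of Theorem~\ref{thm:limit-zero} with a uniform-integrability upgrade to get convergence of variances, the only new ingredient being that convergence of variances requires control of \emph{second} moments of the rescaled quantities, hence the strengthened moment hypotheses. First I would note that, by Theorem~\ref{thm:limit-zero} and the continuous mapping theorem via the continuous functionals $\cL$ and $\cA$ (Lemma~\ref{lem:functional-continuity}), we have $n^{-1/2} L_n \tod \cL(\Sigma^{1/2} h_1)$ and $n^{-1} A_n \tod \cA(\Sigma^{1/2}h_1) = a_1\sqrt{\det\Sigma}$ (this is just Corollary~\ref{cor:zero-limits}). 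To conclude $n^{-1}\Var L_n \to \Var\cL(\Sigma^{1/2}h_1) = u_0(\Sigma)$ it then suffices to show that $(n^{-1/2}L_n)^2$ is uniformly integrable, i.e.\ that $\sup_n \Exp[(n^{-1/2}L_n)^{2+\delta}] < \infty$ for some $\delta > 0$; likewise $n^{-2}\Var A_n \to v_0\det\Sigma$ follows once $\sup_n \Exp[(n^{-1}A_n)^{2+\delta}]<\infty$. (One must also record that the \emph{first} moments converge to the limiting means, which is exactly Propositions~\ref{prop:EL-zero} and \ref{prop:EA-zero}, together with $\Var\zeta_n = \Exp[\zeta_n^2] - (\Exp\zeta_n)^2$.)

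For $L_n$: from \eqref{eq:walk-cauchy} we have $L_n \leq 2\pi \max_{0\leq k\leq n}\|S_k\|$, so $(n^{-1/2}L_n)^{p} \leq (2\pi)^{p} n^{-p/2}\max_{0\leq k\leq n}\|S_k\|^{p}$; taking $p$ as in the hypothesis \eqref{ass:moments} with $p>2$, Lemma~\ref{lem:walk_moments}(ii) gives $\Exp[\max_{0\leq k\leq n}\|S_k\|^{p}] = O(n^{p/2})$, hence $\sup_n \Exp[(n^{-1/2}L_n)^{p}]<\infty$ with $p=2+\delta$, $\delta=p-2>0$. For $A_n$: since $\mu=0$, Lemma~\ref{lem:A_moments}(ii) with exponent $p/2$ (legitimate because \eqref{ass:moments} holds for some $p>4$, so $\Exp[\|Z\|^{2\cdot(p/2)}]<\infty$ with $p/2>2$) gives $\Exp[A_n^{p/2}] = O(n^{p/2})$, hence $\sup_n\Exp[(n^{-1}A_n)^{p/2}]<\infty$ with $p/2 = 2+\delta$, $\delta = p/2-2>0$. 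These bounds deliver the required uniform integrability of the squares, and the variance limits follow from the standard fact (quoted in the text before the proof of Proposition~\ref{prop:EL-zero}) that distributional convergence plus uniform integrability yields convergence of the relevant moments. Finally, the identity $\Var\cA(\Sigma^{1/2}h_1) = \Var(a_1\sqrt{\det\Sigma}) = (\det\Sigma)\Var a_1 = v_0\det\Sigma$ comes straight from $\cA(\Sigma^{1/2}h_1) = a_1\sqrt{\det\Sigma}$ (affine scaling of area) and the definition \eqref{eq:var_constants}.

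The only genuinely delicate point is the bookkeeping on moment exponents: one needs uniform integrability of $(n^{-1/2}L_n)^2$ and $(n^{-1}A_n)^2$, which requires a $(2+\delta)$-th moment bound, and tracing this back through Cauchy's formula / the bounding rectangle and through Lemma~\ref{lem:walk_moments} and Lemma~\ref{lem:A_moments} shows exactly why one needs $p>2$ for $L_n$ and $p>4$ for $A_n$ (the area bound costs a factor of two in the moment exponent because of the product structure $A_n \lesssim \max\|S_k\|^2$). Everything else is an invocation of results already in place; there is no serious obstacle beyond this accounting.

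\begin{proof}
By Corollary~\ref{cor:zero-limits}, $n^{-1/2} L_n \tod \cL(\Sigma^{1/2} h_1)$ and $n^{-1} A_n \tod a_1\sqrt{\det\Sigma}$ as $n\to\infty$.

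Consider first $L_n$, assuming \eqref{ass:moments} for some $p>2$. By \eqref{eq:walk-cauchy},
\[ ( n^{-1/2} L_n )^{p} \leq (2\pi)^{p} \, n^{-p/2} \max_{0\leq k\leq n} \| S_k \|^{p} , \]
and Lemma~\ref{lem:walk_moments}(ii) gives $\Exp\bigl[ \max_{0\leq k\leq n} \| S_k \|^{p} \bigr] = O(n^{p/2})$. Hence $\sup_n \Exp\bigl[ ( n^{-1/2} L_n )^{p} \bigr] < \infty$ with $p = 2 + \delta$ for $\delta := p - 2 > 0$, so $( n^{-1/2} L_n )^2$ is uniformly integrable. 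Combined with the distributional convergence, this yields $\lim_{n\to\infty} \Exp\bigl[ ( n^{-1/2} L_n )^2 \bigr] = \Exp\bigl[ \cL(\Sigma^{1/2}h_1)^2 \bigr]$; together with $\lim_{n\to\infty} n^{-1/2}\Exp L_n = \Exp \cL(\Sigma^{1/2}h_1)$ from Proposition~\ref{prop:EL-zero}, we obtain
\[ \lim_{n\to\infty} n^{-1} \Var L_n = \Var \cL(\Sigma^{1/2} h_1) = u_0(\Sigma) , \]
by \eqref{eq:var_constants}.

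Now suppose in addition that \eqref{ass:moments} holds for some $p > 4$. Since $\mu = 0$ and $\Exp\bigl[ \| Z \|^{2(p/2)} \bigr] < \infty$ with $p/2 > 2$, Lemma~\ref{lem:A_moments}(ii) (applied with exponent $p/2$) gives $\Exp\bigl[ A_n^{p/2} \bigr] = O(n^{p/2})$, so $\sup_n \Exp\bigl[ ( n^{-1} A_n )^{p/2} \bigr] < \infty$. As $p/2 = 2 + \delta$ with $\delta := (p/2) - 2 > 0$, the family $( n^{-1} A_n )^2$ is uniformly integrable. Hence $\lim_{n\to\infty} \Exp\bigl[ ( n^{-1} A_n )^2 \bigr] = (\det\Sigma)\,\Exp[a_1^2]$, and with $\lim_{n\to\infty} n^{-1}\Exp A_n = \frac{\pi}{2}\sqrt{\det\Sigma} = \sqrt{\det\Sigma}\,\Exp a_1$ from Proposition~\ref{prop:EA-zero},
\[ \lim_{n\to\infty} n^{-2} \Var A_n = (\det\Sigma)\,\Var a_1 = v_0 \det\Sigma , \]
using $\cA(\Sigma^{1/2}h_1) = a_1\sqrt{\det\Sigma}$ and \eqref{eq:var_constants}. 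This completes the proof.
\end{proof}
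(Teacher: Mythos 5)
Your argument is correct and follows the paper's own proof essentially verbatim: both use the bound \eqref{eq:walk-cauchy} with Lemma~\ref{lem:walk_moments}(ii) to get a $p$-th moment bound on $n^{-1/2}L_n$ (hence uniform integrability of its square), and Lemma~\ref{lem:A_moments}(ii) with exponent $p/2$ to do the same for $n^{-1}A_n$, then upgrade the distributional limits of Corollary~\ref{cor:zero-limits} to convergence of second moments and variances. The only difference is that you spell out the first-moment convergence via Propositions~\ref{prop:EL-zero} and \ref{prop:EA-zero}, which the paper leaves implicit.
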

\begin{proof}
From \eqref{eq:walk-cauchy} and Lemma \ref{lem:walk_moments}(ii), for $p > 2$ we have $\sup_n \Exp [ ( n^{-1} L_n ^2 )^{p/2} ] < \infty$.
Hence $n^{-1} L_n^2$ is uniformly integrable, and we deduce convergence of $n^{-1} \Var L_n$ in Corollary \ref{cor:zero-limits}.
Similarly, given  $\Exp [ \|Z_1\|^p ] < \infty$ for  $p >4$, Lemma \ref{lem:A_moments}(ii) shows that
$\Exp [ A_n^{2(p/4)} ] = O ( n^{p/2} )$, so that 
$\Exp [ ( n^{-2} A^2_n )^{p/4} ]$
is uniformly bounded. Hence $ n^{-2} A_n^2$ is uniformly integrable,
and we deduce convergence of $n^{-2} \Var A_n$ in Corollary \ref{cor:zero-limits}.
\end{proof}

For the case with drift, we have the following variance result.

\begin{proposition}
\label{prop:var-limit-drift}
Suppose that \eqref{ass:moments} holds for some $p > 4$ and $\mu \neq 0$.
Then
\[ \lim_{n \to \infty} n^{-3} \Var A_n = v_+ \| \mu \|^2 \sperp.\]
\end{proposition}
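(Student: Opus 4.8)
The plan is to mirror the structure of Proposition~\ref{prop:var-limit-zero}: combine the distributional convergence from Corollary~\ref{cor:A-limit-drift} with a uniform integrability estimate strong enough to upgrade convergence in distribution to convergence of second moments. Recall that Corollary~\ref{cor:A-limit-drift} gives $n^{-3/2} A_n \tod \| \mu \| ( \sperp )^{1/2} \tilde a_1$ when $\mu \neq 0$ and $\sperp > 0$; squaring, $n^{-3} A_n^2 \tod \| \mu \|^2 \sperp \, \tilde a_1^2$ by the continuous mapping theorem. To conclude that $n^{-3} \Exp[A_n^2] \to \| \mu \|^2 \sperp \, \Exp[\tilde a_1^2]$ and $n^{-3/2} \Exp[A_n] \to \| \mu \| (\sperp)^{1/2} \Exp \tilde a_1$ (the latter already supplied, under a weaker hypothesis, by Proposition~\ref{prop:EA-drift}), it suffices to show $\{ n^{-3} A_n^2 \}_{n \geq 1}$ is uniformly integrable. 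Then $n^{-3} \Var A_n = n^{-3}\Exp[A_n^2] - n^{-3}(\Exp[A_n])^2 \to \| \mu \|^2 \sperp (\Exp[\tilde a_1^2] - (\Exp \tilde a_1)^2) = v_+ \| \mu \|^2 \sperp$, using the definition $v_+ = \Var \tilde a_1$ from \eqref{eq:var_constants}.

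For the uniform integrability, I would invoke Lemma~\ref{lem:A_moments}(i): under \eqref{ass:moments} with $p > 4$ we may apply that lemma with exponent $p/2 > 2$ to get $\Exp[ A_n^{p/2} ] = O( n^{3p/4} )$, hence $\Exp[ (n^{-3/2} A_n)^{p/2} ] = O(1)$, i.e.\ $\sup_n \Exp[ (n^{-3} A_n^2)^{p/4} ] < \infty$ with $p/4 > 1$. Boundedness of the $(p/4)$-th moments with $p/4 > 1$ gives uniform integrability of $\{ n^{-3} A_n^2 \}$ by the de~la~Vall\'ee-Poussin criterion (or simply the standard fact quoted before the proof of Proposition~\ref{prop:EL-zero}, applied at the level of $n^{-3}A_n^2$). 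This is exactly the hypothesis $p > 4$ in the statement, so no strengthening is needed.

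One small point to handle cleanly: the degenerate case $\sperp = 0$. As noted in the Remarks after Corollary~\ref{cor:A-limit-drift}, $\sperp = 0$ forces $A_n = 0$ a.s., so $\Var A_n = 0$ and the claimed limit $v_+ \| \mu \|^2 \sperp = 0$ holds trivially; thus we may assume $\sperp > 0$, which is what Corollary~\ref{cor:A-limit-drift} requires. I would dispose of this at the start of the proof in one line.

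I do not anticipate a genuine obstacle here — the argument is a routine ``uniform integrability plus weak convergence'' upgrade, entirely parallel to the area part of Proposition~\ref{prop:var-limit-zero}, with Lemma~\ref{lem:A_moments}(i) (rather than (ii)) supplying the moment bound because of the larger $n^{3/2}$ scaling in the drift case. The only thing to be careful about is bookkeeping the exponents so that the moment condition $p > 4$ is used exactly where it is needed: one wants $(p/4)$-th moments of $n^{-3}A_n^2$ bounded, which needs $p/2 > 2$ in Lemma~\ref{lem:A_moments}(i) and $p/4 > 1$ for uniform integrability, both equivalent to $p > 4$.

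\begin{proof}[Proof of Proposition \ref{prop:var-limit-drift}.]
If $\sperp = 0$ then, as noted following Corollary \ref{cor:A-limit-drift}, the points $S_0, \ldots, S_n$ are a.s.\ collinear, so $A_n = 0$ a.s.\ and both sides of the claimed identity vanish. Hence we may assume $\sperp > 0$.

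By Corollary \ref{cor:A-limit-drift}, $n^{-3/2} A_n \tod \| \mu \| ( \sperp )^{1/2} \tilde a_1$ as $n \to \infty$, and so by the continuous mapping theorem $n^{-3} A_n^2 \tod \| \mu \|^2 \sperp \, \tilde a_1^2$. Since \eqref{ass:moments} holds for some $p > 4$, we have $p/2 > 2$, and Lemma \ref{lem:A_moments}(i) applied with exponent $p/2$ gives $\Exp [ A_n^{p/2} ] = O ( n^{3p/4} )$, whence
\[ \sup_{n} \Exp \left[ \left( n^{-3} A_n^2 \right)^{p/4} \right] = \sup_n \Exp \left[ \left( n^{-3/2} A_n \right)^{p/2} \right] < \infty . \]
As $p/4 > 1$, it follows that $\{ n^{-3} A_n^2 \}_{n \geq 1}$ is uniformly integrable, and hence (using the fact quoted before the proof of Proposition \ref{prop:EL-zero}) $\lim_{n \to \infty} n^{-3} \Exp [ A_n^2 ] = \| \mu \|^2 \sperp \, \Exp [ \tilde a_1^2 ]$. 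Likewise, from Proposition \ref{prop:EA-drift} (whose hypotheses are implied by those assumed here), $\lim_{n \to \infty} n^{-3/2} \Exp [ A_n ] = \| \mu \| ( \sperp )^{1/2} \Exp \tilde a_1$, so $\lim_{n \to \infty} n^{-3} ( \Exp [ A_n ] )^2 = \| \mu \|^2 \sperp \, ( \Exp \tilde a_1 )^2$. Therefore
\[ \lim_{n \to \infty} n^{-3} \Var A_n = \| \mu \|^2 \sperp \left( \Exp [ \tilde a_1^2 ] - ( \Exp \tilde a_1 )^2 \right) = \| \mu \|^2 \sperp \, \Var \tilde a_1 = v_+ \| \mu \|^2 \sperp , \]
by the definition of $v_+$ in \eqref{eq:var_constants}.
\end{proof}
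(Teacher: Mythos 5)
Your proof is correct and follows essentially the same route as the paper: Lemma~\ref{lem:A_moments}(i) with exponent $p/2$ gives $\sup_n \Exp[(n^{-3}A_n^2)^{p/4}] < \infty$, hence uniform integrability of $n^{-3}A_n^2$, and Corollary~\ref{cor:A-limit-drift} then yields convergence of the rescaled second moment and variance. Your explicit one-line disposal of the degenerate case $\sperp = 0$ (which Corollary~\ref{cor:A-limit-drift} excludes but the proposition does not) is a small tidying-up that the paper leaves implicit.
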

\begin{proof}
Given  $\Exp [ \|Z_1\|^p ] < \infty$ for some $p >4$, Lemma \ref{lem:A_moments}(i) shows that
$\Exp [ A_n^{2(p/4)} ] = O ( n^{3p/4} )$, so that 
$\Exp [ ( n^{-3} A^2_n )^{p/4} ]$
is uniformly bounded. Hence $ n^{-3} A_n^2$ is uniformly integrable,
so Corollary \ref{cor:A-limit-drift} yields the result.
\end{proof}

\subsection{Variance bounds}
 
The next result gives bounds on the quantities defined in \eqref{eq:var_constants}.

\begin{proposition}
\label{prop:var_bounds}
We have $u_0 (\Sigma) =0$ if and only if $\trace \Sigma =0$.
The following inequalities for the quantities defined at \eqref{eq:var_constants} hold.
\begin{alignat}{1}
  \frac{263}{1080} \pi^{-3/2}  \re^{-144/25} 
\trace \Sigma
&{} \leq  u_0 ( \Sigma)   \leq \frac{\pi^2}{2} \trace \Sigma ; \label{eq:u-bounds} \\
0 < \frac{4}{49} \left( \re^{- 7\pi^2 / 12} 
- 
\frac{1}{3} \re^{-21 \pi^2 / 4}
\right)^2  &{} \leq  v_0   \leq  16 (\log 2)^2 - \frac{\pi^2}{4}; \label{eq:v0-bounds} \\
0 < \frac{2}{225} \left( 
\re^{-25 \pi/9}
-\frac{1}{3} 
\re^{-25 \pi}
\right) &{} \leq  v_+   \leq 4 \log 2 - \frac{2 \pi}{9}. \label{eq:v1-bounds}
\end{alignat}
Finally, if $\Sigma = I$ we have the following sharper form of the lower bound in \eqref{eq:u-bounds}:
\[ \Var \ell_1 = u_0 (  I ) \geq \frac{2}{5} \left(1-\frac{8}{25\pi} \right) \re^{-25 \pi /16 } > 0 .\]
\end{proposition}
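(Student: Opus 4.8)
\textbf{Proof plan for Proposition \ref{prop:var_bounds}.}

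The plan is to handle the four claims separately, but all by the same underlying principle: a variance is controlled above by a second moment and below by squaring a correlation with, or a conditional-expectation projection onto, some well-chosen simple event or random variable. The equivalence ``$u_0(\Sigma) = 0$ iff $\trace\Sigma = 0$'' is the degenerate case: if $\trace\Sigma = 0$ then $\Sigma = 0$, so $\Sigma^{1/2}h_1 = \{0\}$ and $\cL$ of it is $0$ a.s.; conversely if $\trace\Sigma > 0$ then $\Sigma^{1/2}h_1$ has positive one-dimensional extent in at least one direction, and the construction below producing a strictly positive lower bound $c\,\trace\Sigma$ shows $u_0(\Sigma) > 0$.

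\emph{Upper bounds.} For $u_0(\Sigma) = \Var\cL(\Sigma^{1/2}h_1)$, the clean route is to pass through the random-walk bound \eqref{eq:ss}: by Theorem \ref{thm:limit-zero}, Corollary \ref{cor:zero-limits} and the uniform-integrability argument of Proposition \ref{prop:var-limit-zero}, $n^{-1}\Var L_n \to u_0(\Sigma)$, while \eqref{eq:ss} gives $n^{-1}\Var L_n \le \tfrac{\pi^2}{2}\sigma^2 = \tfrac{\pi^2}{2}\trace\Sigma$ for every $n$; taking $n \to \infty$ yields the upper bound in \eqref{eq:u-bounds}. For $v_0 = \Var a_1$ and $v_+ = \Var\tilde a_1$, I would instead compute $\Exp a_1^2$ and $\Exp\tilde a_1^2$ — or rather bound them — and subtract the exact means $\Exp a_1 = \pi/2$ (El Bachir) and $\Exp\tilde a_1 = \tfrac13\sqrt{2\pi}$ (Proposition \ref{prop:EA-drift}). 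The numbers $16(\log 2)^2$ and $4\log 2$ strongly suggest using the area formula built from the Barndorff--Nielsen--Baxter-type triangle decomposition (the continuum analogue of \eqref{eq:bnb}), where a double sum $\sum_{k,m} m^{-1/2}(k-m)^{-1/2}$-type object in the first-moment computation becomes, in the second-moment computation, a quadruple sum whose diagonal-dominant part evaluates through $\int_0^1\!\int_0^1$ Beta-type integrals; the $\log 2$ arises from $\sum 1/(k(k-m))$-style partial fractions. Concretely one writes $a_1$ (resp. $\tilde a_1$) as an integral over pairs of times of triangle areas, squares, uses independence/Gaussianity of Brownian increments on disjoint intervals to reduce the expectation to elementary moments of normals, and carries out the resulting (finite) deterministic integrals.

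\emph{Lower bounds.} These are where the real work lies. The unifying device is: for any random variable $X$ and any event $E$, $\Var X \ge \Pr(E)\Pr(E^c)(\Exp[X\mid E] - \Exp[X\mid E^c])^2$, or more simply $\Var X \ge (\Exp[X\1_E] - \Exp[X]\Pr(E))^2/\Pr(E)$; even more crudely, if one can find two events on which $X$ is forced into disjoint ranges, one gets a bound. The exponential factors $\re^{-144/25}$, $\re^{-7\pi^2/12}$, $\re^{-25\pi/9}$, $\re^{-25\pi/16}$ are the tell-tale signature of small-ball / large-deviation estimates for Brownian motion: e.g. $\Pr(\sup_{[0,1]}|b| \le \delta)$ and $\Pr(\sup_{[0,1]}|b| \ge R)$ decay like $\exp(-c/\delta^2)$ and $\exp(-cR^2)$ respectively, and the specific rational constants come from optimizing the threshold. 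So for $u_0(\Sigma)$ I would condition on the Brownian path staying small up to time $1/2$ and then making a large excursion in a fixed direction (scaled by $\Sigma^{1/2}$) on $[1/2,1]$ versus the complementary behaviour, producing two scenarios with perimeter lengths differing by an amount proportional to $\|\Sigma^{1/2}\|_{\mathrm{op}} \ge \sqrt{\tfrac12\trace\Sigma}$ (using the operator-norm $\ge \tfrac12\trace$ inequality already invoked after \eqref{EL-bounds}); squaring gives $c\,\trace\Sigma$. For the $\Sigma = I$ sharper bound, the single factor $\re^{-25\pi/16}$ and the coefficient involving $8/(25\pi)$ suggest a cleaner one-event computation, perhaps conditioning on $b$ reaching distance $5/4$ (or similar) from the origin and using the explicit density of $\sup_{[0,1]}\|b\|$ or a reflection-type bound, combined with the exact value $\Exp\ell_1 = \sqrt{8\pi}$ to pin the mean. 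For $v_0$ and $v_+$ the same philosophy applies to areas: the expression $(\re^{-7\pi^2/12} - \tfrac13\re^{-21\pi^2/4})^2$ looks like $(\Exp[\tilde a_1\1_E] - \text{something})^2$ where the two terms are a lower bound on the contribution of a ``spread-out'' event and an upper bound on a correction term, the $\tfrac13$ echoing the $\tfrac13\sqrt{2\pi}$ in the mean.

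\emph{Main obstacle.} The hard part is the lower bounds, specifically getting \emph{quantitative, fully explicit} small-ball and large-deviation constants for the planar Brownian convex hull's perimeter and area, and then choosing the conditioning events so that the two conditional regimes are genuinely separated in the functional of interest while both events retain non-negligible (explicitly bounded below) probability. One must be careful that the events used are defined in terms of the path (not the hull) so that the Brownian small-ball estimates apply directly, and that the monotonicity of $\cL$ and $\cA$ under set inclusion (noted after Cauchy's formula) is used to translate path containment into clean one-sided bounds on the functionals. Matching the precise rational exponents in the statement will require tracking constants through these estimates rather than using soft asymptotics, which is the genuinely laborious — but routine — part of the argument.
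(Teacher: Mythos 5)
Your plan for the upper bound in \eqref{eq:u-bounds} (pass the Snyder--Steele bound \eqref{eq:ss} through the variance limit of Proposition \ref{prop:var-limit-zero}) is exactly the paper's argument, and your lower bounds are in the right spirit: the paper uses the elementary inequality $\Var X \ge \alpha^2(\Exp X)^2\,\Pr[\,|X-\Exp X|\ge \alpha\Exp X\,]$ with a \emph{single} well-chosen tail --- for $\cL(\Sigma^{1/2}h_1)$ the upper tail, bounded below via containment of a diameter segment, the reflection principle and the Gaussian tail bound \eqref{eq:gauss-bound}; for $a_1$ and $\tilde a_1$ the lower tail, bounded below via rectangle/disc containments and the small-ball estimate \eqref{eq:brown_norm}. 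Your two-event conditional-expectation device is heavier than needed but workable in principle; note however that the specific constants in the statement are exactly those produced by the one-tail version with $\alpha=1/5$, $1/4$, $1/7$, $1/10$ respectively, so to prove the inequalities \emph{as stated} you would have to redo your optimisation to land on those numbers rather than some others.

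The genuine gap is your route to the upper bounds on $v_0$ and $v_+$. You propose to bound $\Exp[a_1^2]$ and $\Exp[\tilde a_1^2]$ via a squared continuum Barndorff--Nielsen--Baxter triangle decomposition, guessing that the $\log 2$'s arise from partial-fraction sums. That is not where they come from, and it is far from clear that a quadruple integral of products of triangle areas would evaluate to, or be boundable by, exactly $16(\log 2)^2$ and $4\log 2$: the cross terms involve overlapping time intervals and do not reduce to independent Gaussian moments. The actual source is Feller's formula $\Exp[r_1^2]=4\log 2$ for the range $r_1=\sup_{0\le s\le 1}w(s)-\inf_{0\le s\le 1}w(s)$ of one-dimensional Brownian motion, combined with trivial containments: $\tilde h_1\subseteq[0,1]\times[\inf w,\sup w]$ gives $\tilde a_1\le r_1$, hence $\Exp[\tilde a_1^2]\le 4\log 2$; and $h_1$ is contained in the product of the ranges of two independent orthogonal coordinates, giving $\Exp[a_1^2]\le(\Exp[r_1^2])^2=16(\log 2)^2$. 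Subtracting the squared means $(\tfrac13\sqrt{2\pi})^2=\tfrac{2\pi}{9}$ and $(\tfrac{\pi}{2})^2=\tfrac{\pi^2}{4}$ then finishes. Without Feller's result (or an equivalent explicit second-moment bound on the range) your plan cannot reach the stated constants.
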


For the proof of this result, we rely on a few facts about one-dimensional Brownian motion,
including the bound  (see e.g.\ equation (2.1) of  \cite{jp}), valid for all $r>0$,
\begin{equation}
\label{eq:brown_norm}
\Pr \left[ \sup_{ 0 \leq s \leq 1} | w (s) | \leq r \right] \geq 
 \frac{4}{\pi} \left( \re^{-\pi^2/(8r^2)} - \frac{1}{3} \re^{-9\pi^2/(8r^2)} \right) .
\end{equation}
We let $\Phi$ denote the distribution function of a standard normal random variable; we will also need
the standard Gaussian tail bound (see e.g.~\cite[p.~12]{durrett})
\begin{equation}
\label{eq:gauss-bound}
1 - \Phi( x) = \frac{1}{\sqrt{2 \pi}} \int_x^\infty \re^{-y^2/2} \ud y \geq \frac{1}{x \sqrt{2 \pi}}   \left(1 - \frac{1}{x^2} \right) \re^{-x^2/2} , ~~ \text{for } x > 0 .
\end{equation}
We also note  that for $e \in \Sp_1$ the diffusion
$e \cdot (\Sigma^{1/2} b)$ is one-dimensional
 Brownian motion with
variance parameter $e^\tra \Sigma e$.

The idea behind the variance lower bounds is elementary. For a  random variable $X$ with mean $\Exp X$,
we have, for any $\theta \geq 0$, $\Var X = \Exp [ (X- \Exp X )^2 ] \geq \theta^2 \Pr [ | X - \Exp X | \geq \theta ]$.
If $\Exp X  \geq 0$,
taking $\theta = \alpha  \Exp X $ for $\alpha >0$, we obtain
\begin{equation}
\label{eq:var-bound}
 \Var X \geq \alpha^2 ( \Exp X )^2 \big( \Pr [ X \leq (1-\alpha)   \Exp X   ] + \Pr [ X \geq (1 + \alpha)   \Exp X   ] \big)
 ,\end{equation}
and  our lower bounds  use whichever of the
 latter two probabilities is most convenient.

\begin{proof}[Proof of Proposition \ref{prop:var_bounds}.]
We start with the upper bounds. Snyder and Steele's bound   \eqref{eq:ss}  with
the statement for $\Var L_n$ in
Proposition \ref{prop:var-limit-zero} gives the upper bound in \eqref{eq:u-bounds}.

Bounding $\tilde a_1$ by the area of a rectangle, we have
\begin{equation}
\label{eq:a1-upper}
\tilde a_1 \leq r_1 \leq 2 \sup_{0 \leq s \leq 1} | w (s) |, \as , \end{equation}
where $r_1 := \sup_{0 \leq s \leq 1} w (s) - \inf_{0 \leq s \leq 1} w (s)$. 
A result of Feller \cite{feller} states that $\Exp [ r_1^2 ] = 4 \log 2$.
So by the first inequality in \eqref{eq:a1-upper}, we have $\Exp [\tilde a_1^2] \leq 4 \log 2$, 
 and by Proposition \ref{prop:EA-drift}
we have $\Exp \tilde a_1 = \frac{1}{3} \sqrt{ 2 \pi}$; the upper bound in \eqref{eq:v1-bounds} follows.

Similarly, for any orthonormal basis $\{ e_1, e_2\}$ of $\R^2$, we bound $a_1$ by a rectangle
\[ a_1 \leq \left( \sup_{0 \leq s \leq 1} e_1 \cdot b(s) - \inf_{0 \leq s \leq 1 } e_1 \cdot b(s) \right) 
   \left( \sup_{0 \leq s \leq 1} e_2 \cdot b(s) - \inf_{0 \leq s \leq 1 } e_2 \cdot b(s) \right) ,\]
and the two (orthogonal) components are independent, so $\Exp [ a_1^2 ] \leq ( \Exp [ r_1^2 ] )^2 = 16 (\log 2)^2$,
which with the fact that $\Exp a_1 = \frac{\pi}{2}$ \cite{elbachir} gives the upper bound in \eqref{eq:v0-bounds}.

We now move on to the lower bounds.
Let $e_\Sigma \in \Sp_1$ denote an eigenvector of $\Sigma$ corresponding to the principal eigenvalue $\lambda_\Sigma$.
Then since $\Sigma^{1/2} h_1$ contains the line segment from $0$ to any (other) point in $\Sigma^{1/2} h_1$, we have from monotonicity of $\cL$ that
\[ \cL ( \Sigma^{1/2} h_1 )  \geq 2 \sup_{0 \leq s \leq 1 } \| \Sigma^{1/2} b(s) \| \geq 2 \sup_{0 \leq s \leq 1 } \left( e_\Sigma \cdot ( \Sigma^{1/2} b (s) ) \right) .\]
Here $e_\Sigma \cdot ( \Sigma^{1/2} b )$ has the same distribution as $\lambda_\Sigma^{1/2} w$. Hence, for $\alpha > 0$,
\begin{align*} \Pr \left[ \cL ( \Sigma^{1/2} h_1) \geq (1+ \alpha) \Exp \cL  ( \Sigma^{1/2} h_1) \right] & 
\geq 
\Pr \left[ \sup_{0 \leq s \leq 1} w(s) \geq \frac{1+\alpha}{2} \lambda_\Sigma^{-1/2} \Exp \cL  ( \Sigma^{1/2} h_1) \right] \\
& \geq 
\Pr \left[ \sup_{0 \leq s \leq 1} w(s) \geq 2 (1+\alpha) \sqrt{2}    \right] ,\end{align*}
using the fact that $\lambda_\Sigma \geq \frac{1}{2} \trace \Sigma$ and the upper bound
in \eqref{EL-bounds}. 
Applying \eqref{eq:var-bound} to $X = \cL  ( \Sigma^{1/2} h_1) \geq 0$ gives, for $\alpha >0$,
\begin{align*}
\Var \cL (\Sigma^{1/2} h_1 ) & \geq   \alpha^2  ( \Exp \cL  ( \Sigma^{1/2} h_1) )^2 \Pr \left[ \sup_{0 \leq s \leq 1} w(s) \geq 2 (1+\alpha) \sqrt{2}    \right] \\
& \geq \frac{32}{\pi}
\alpha^2 \left( \trace \Sigma \right) \left( 1 - \Phi ( 2 (1+\alpha) \sqrt{2}  ) \right)
,\end{align*}
using the lower bound in \eqref{EL-bounds} and the fact that
$\Pr [ \sup_{0 \leq s \leq 1 } w(s) \geq r ] = 2 \Pr [ w(1) \geq r ] = 2 ( 1 -\Phi (r))$ for $r >0$, which is
a consequence of the reflection principle. Numerical curve sketching suggests that $\alpha = 1/5$ is close to optimal; this choice of $\alpha$ gives,
using \eqref{eq:gauss-bound},
\[ \Var \cL (\Sigma^{1/2} h_1 ) \geq \frac{32}{25 \pi} \left( \trace \Sigma \right) \left( 1 - \Phi (  12 \sqrt{2} /5 ) \right)
\geq  \frac{263}{1080} \pi^{-3/2} \left( \trace \Sigma \right) \exp \left\{ -\frac{144}{25} \right\} ,\]
which is the lower bound in \eqref{eq:u-bounds}.
We get a sharper result when $\Sigma = I$ and $\cL ( h_1 ) = \ell_1$, since we know $\Exp \ell_1 = \sqrt{ 8 \pi}$ explicitly.
Then, similarly to above, we get 
\[ \Var \ell_1 \geq 8 \pi \alpha^2 \Pr 
 \left[ \sup_{0 \leq s \leq 1} w(s) \geq (1+\alpha) \sqrt{2 \pi}  \right] , \text{ for } \alpha >0, \]
which at $\alpha = 1/4$ yields the stated lower bound.

For areas,  tractable upper bounds for $a_1$ and $\tilde a_1$ are easier to come by than
lower bounds, and thus we obtain a lower bound on the variance by showing
the appropriate area has positive probability of being smaller than the corresponding mean.

Consider $a_1$; recall $\Exp a_1 = \frac{\pi}{2}$ \cite{elbachir}. Since, for any orthonormal basis $\{e_1, e_2\}$ of $\R^2$,
\[ a_1 \leq \pi \sup_{0 \leq s \leq 1} \| b (s) \|^2 \leq \pi \sup_{0 \leq s \leq 1 } |  e_1 \cdot b(s) |^2 +  \pi \sup_{0 \leq s \leq 1 } |  e_2\cdot b(s) |^2 ,\]
 using the fact that $e_1 \cdot b$ and $e_2 \cdot b$ are independent one-dimensional Brownian motions,
\[ \Pr [ a_1 \leq r ] \geq \Pr \left[   \sup_{0 \leq s \leq 1 } | w (s)   |^2 \leq \frac{r}{2\pi}  \right]^2  , ~ \text{for} ~ r >0 .\]
We apply \eqref{eq:var-bound} with $X = a_1$ and $\alpha \in (0,1)$, and set $r = (1-\alpha) \frac{\pi}{2}$ to obtain
\begin{align*}
\Var a_1 & \geq \alpha^2 \frac{\pi^2}{4}  \Pr \left[   \sup_{0 \leq s \leq 1 } |  w (s) | \leq \frac{\sqrt{1-\alpha}}{2}  \right]^2 \\
& \geq 4 \alpha^2 \left(  \exp \left\{-\frac{\pi^2}{2(1-\alpha) } \right\} 
- \frac{1}{3}  \exp \left\{-\frac{9 \pi^2}{2(1- \alpha) } \right\}  \right)^2 ,\end{align*}
by \eqref{eq:brown_norm}.
Taking $\alpha = 1/7$ is close to optimal, and gives the lower bound in \eqref{eq:v0-bounds}.

For $\tilde a_1$, we apply \eqref{eq:var-bound} with $X = \tilde a_1$ and $\alpha \in (0,1)$.
Using the fact
that $\Exp \tilde a_1 = \frac{1}{3} \sqrt{2 \pi}$ (from Proposition \ref{prop:EA-drift}) and the weaker of the two bounds in \eqref{eq:a1-upper},
we obtain
\begin{align*}
\Var \tilde a_1 & \geq \alpha^2 \frac{2 \pi}{9} \Pr \left[ \sup_{0 \leq s \leq 1} |  w (s) | \leq \frac{ (1-\alpha) \sqrt{2 \pi} }{6} \right]  \\
& \geq  \frac{8}{9} \alpha^2  \left( 
\exp \left\{-\frac{9\pi}{4 (1-\alpha)^2 } \right\} 
- \frac{1}{3}  \exp \left\{-\frac{81\pi}{4 (1-\alpha)^2 } \right\}   \right) ,\end{align*}
by \eqref{eq:brown_norm}.
Taking $\alpha = 1/10$ is close to optimal, and gives the lower bound in \eqref{eq:v1-bounds}.
\end{proof}

\begin{remarks}
(i) The main interest of the lower bounds in Proposition \ref{prop:var_bounds} is that they are \emph{positive};
they are certainly not sharp. The bounds can surely be improved, although
the authors have  been unable to improve any of them sufficiently  to warrant reporting the details here. We note just the following idea.
A lower bound for $\tilde a_1$ can be obtained by conditioning on $\theta := \sup \{ s \in [0,1] : w(s) =0\}$
and using the fact that the maximum of $w$ up to time $\theta$ is distributed as the maximum of a scaled Brownian bridge;
combining this with the previous argument improves the lower bound on $v_+$ to $2.09 \times 10^{-6}$.\\
(ii)
It would, of course, be of interest to evaluate any of $u_0$, $v_0$, or $v_+$ exactly. 
Although this looks hard, hope is provided by a remarkable computation by Goldman \cite{goldman} for the analogue of $u_0(I)= \Var \ell_1$ for the planar \emph{Brownian bridge}. Specifically, if
$b'_t$ is the standard Brownian bridge in $\R^2$ with $b'_0
= b'_1 = 0$, and $\ell'_1 = \cL ( \hull b' [0,1] )$ the perimeter length of its convex hull, 
 \cite[Th\'eor\`eme 7]{goldman} states that 
\[ \Var   \ell'_1 {} = {} 
\frac{\pi^2}{6} \left( 2 \pi \int_0^\pi \frac{ \sin \theta}{\theta} \ud \theta -   2 - 3 \pi \right) \approx 0.34755 . \]
\end{remarks}

\appendix
\section{Appendix: Random walk norms}
\label{sec:rw_norms}

\begin{lemma} 
\label{lem:walk_moments}
Let $p > 1$. Suppose that $\Exp[ \|Z_1\|^p ] < \infty$.
\begin{itemize}
\item[(i)] For any $e \in \Sp_1$ such that $e \cdot \mu = 0$, $\Exp  [ \max_{0\leq m\leq n} |S_m \cdot e|^p ]  = O(n^{1 \vee (p/2)})$. 
\item [(ii)] Moreover, if $\mu = 0$, then
$\Exp [  \max_{0\leq m\leq n} \| S_m  \|^p ]  = O(n^{1 \vee (p/2)})$.
\item[(iii)] On the other hand, if $\mu \neq 0$, then
$\Exp  [\max_{0\leq m\leq n} |S_m \cdot \hat \mu|^p ]= O ( n^p )$.  
\end{itemize}
\end{lemma}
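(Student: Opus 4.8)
The plan is to prove the three parts of Lemma~\ref{lem:walk_moments} using standard maximal inequalities for sums of independent random variables, treating separately the cases of zero and non-zero drift components.

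\medskip

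\noindent\textbf{Part (i).} Fix $e \in \Sp_1$ with $e \cdot \mu = 0$. Then $\{ S_m \cdot e \}_{m \geq 0}$ is a mean-zero random walk on $\R$ with i.i.d.\ increments $Z_k \cdot e$, and $\Exp [ |Z_k \cdot e|^p ] \leq \Exp [ \|Z_k\|^p ] < \infty$. First I would note that $m \mapsto S_m \cdot e$ is a martingale, so $|S_m \cdot e|^{p'}$ is a submartingale for $p' := 1 \vee (p/2) \geq 1$, and Doob's $L^{q}$-maximal inequality (applicable since $q := p / p' \geq 1$... actually more carefully: apply Doob directly to the martingale $S_m \cdot e$ with exponent $p$ when $p \geq 1$) gives
\[ \Exp \Bigl[ \max_{0 \leq m \leq n} |S_m \cdot e|^p \Bigr] \leq \Bigl( \tfrac{p}{p-1} \Bigr)^p \Exp \bigl[ |S_n \cdot e|^p \bigr] . \]
It then remains to bound $\Exp [ |S_n \cdot e|^p ] = O(n^{1 \vee (p/2)})$, which is a Marcinkiewicz--Zygmund (or Rosenthal / von Bahr--Esseen) inequality for sums of i.i.d.\ centred random variables with finite $p$-th moment: for $1 < p \leq 2$ one gets $O(n)$, and for $p > 2$ one gets $O(n^{p/2})$. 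Combining the two displays yields the claim.

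\medskip

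\noindent\textbf{Part (ii).} When $\mu = 0$, write $\| S_m \|^p \leq 2^{p/2} ( |S_m \cdot e_1|^p + |S_m \cdot e_2|^p )$ for an orthonormal basis $\{e_1, e_2\}$ of $\R^2$; since $\mu = 0$ we have $e_i \cdot \mu = 0$ for $i = 1, 2$, so
\[ \Exp \Bigl[ \max_{0 \leq m \leq n} \| S_m \|^p \Bigr] \leq 2^{p/2} \sum_{i=1}^2 \Exp \Bigl[ \max_{0 \leq m \leq n} |S_m \cdot e_i|^p \Bigr] = O(n^{1 \vee (p/2)}) \]
by part~(i) applied to each coordinate.

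\medskip

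\noindent\textbf{Part (iii).} When $\mu \neq 0$, split $S_m \cdot \hat\mu = (S_m \cdot \hat\mu - m \| \mu \|) + m \| \mu \|$, where the first term is a mean-zero random walk (its increments $Z_k \cdot \hat\mu - \|\mu\|$ are centred with finite $p$-th moment). By the triangle inequality in $L^p$ (Minkowski), together with $\max_{0 \leq m \leq n} |m \|\mu\|| = n \|\mu\|$,
\[ \Bigl( \Exp \Bigl[ \max_{0 \leq m \leq n} |S_m \cdot \hat\mu|^p \Bigr] \Bigr)^{1/p} \leq \Bigl( \Exp \Bigl[ \max_{0 \leq m \leq n} |S_m \cdot \hat\mu - m\|\mu\||^p \Bigr] \Bigr)^{1/p} + n \|\mu\| . \]
The first term on the right is $O(n^{(1 \vee (p/2))/p}) = O(n^{1/2} \vee n^{1/p})$, which is $o(n)$, so the whole right-hand side is $O(n)$ and hence $\Exp [ \max_{0 \leq m \leq n} |S_m \cdot \hat\mu|^p ] = O(n^p)$.

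\medskip

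\noindent The main obstacle is locating the cleanest route to the moment bound $\Exp [ |S_n \cdot e|^p ] = O(n^{1 \vee (p/2)})$ for the centred walk in part~(i): for $p > 2$ this is a genuine appeal to the Marcinkiewicz--Zygmund or Rosenthal inequality, while for $1 < p \leq 2$ the von Bahr--Esseen inequality gives $\Exp[|S_n \cdot e|^p] \leq 2n \Exp[|Z_1 \cdot e|^p]$ directly. Everything else is routine, being combinations of Doob's maximal inequality, the $c_r$-inequality, and Minkowski's inequality. One should be a little careful that the stated exponent $1 \vee (p/2)$ is exactly what these inequalities deliver, and that Doob's inequality is applied with the correct exponent (namely $p$ itself, since $S_m \cdot e$ is a martingale and $p > 1$).
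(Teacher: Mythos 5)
Your proof is correct. For parts (i) and (ii) it coincides with the paper's argument: Doob's $L^p$ maximal inequality applied to the (sub)martingale $|S_m\cdot e|$ followed by the Marcinkiewicz--Zygmund bound $\Exp[|S_n\cdot e|^p]=O(n^{1\vee(p/2)})$, and then a reduction of (ii) to (i) via an orthonormal basis (you use the $c_r$-inequality where the paper uses the triangle and Minkowski inequalities; both are fine). For part (iii) you take a genuinely different, though equally standard, route: you centre the walk, writing $S_m\cdot\hat\mu=(S_m\cdot\hat\mu-m\|\mu\|)+m\|\mu\|$, bound the maximum of the centred part by the part-(i) machinery (note you are reusing the \emph{argument} of (i) for the one-dimensional centred walk with increments $Z_k\cdot\hat\mu-\|\mu\|$, not the statement itself, since that walk is not of the form $S_m\cdot e$ with $e\cdot\mu=0$; the argument applies verbatim), and conclude by Minkowski that the $L^p$ norm of the maximum is $O(n)$. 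The paper instead uses the cruder pathwise bound $\max_{0\leq m\leq n}|S_m\cdot\hat\mu|\leq\sum_{k=1}^n\|Z_k\|$ and applies Rosenthal's inequality to this sum of non-negative i.i.d.\ variables. Your decomposition isolates where the $O(n^p)$ order really comes from (the deterministic drift) and shows the fluctuation term is of strictly lower order, which is slightly more informative; the paper's version is shorter and avoids invoking the maximal inequality a second time. Both deliver the stated $O(n^p)$.
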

\begin{proof}
Given that $\mu \cdot e =0$, $S_n \cdot e$ is a martingale, and hence, by convexity,
$| S_n \cdot e|$ is a non-negative submartingale. Then, for $p > 1$,   
\[ \Exp \left[ \max_{0\leq m \leq n} |S_m \cdot e|^p \right] \leq \left( \frac{p}{p-1} \right)^p \Exp \left[ | S_n \cdot e |^p \right]
= O ( n^{ 1 \vee (p/2) } ) ,\]
where the first inequality is Doob's $L^p$ inequality \cite[p.\ 505]{gut}
and the second is the Marcinkiewicz--Zygmund inequality \cite[p.\ 151]{gut}. This gives part (i).
 
Part (ii) follows from part (i): take $\{ e_1, e_2\}$ an orthonormal basis of $\R^2$ and apply (i) with each basis vector;
(ii) then follows from the triangle inequality $\max_{0\leq m\leq n} \| S_m  \|   \leq \max_{0\leq m\leq n} |S_m \cdot e_1 | + \max_{0\leq m\leq n} |S_m \cdot e_2 |$
together
with Minkowski's inequality.

Part (iii) follows from the fact that $\max_{0 \leq m \leq n} | S_m \cdot \hat \mu | \leq \sum_{k=1}^n | Z_k \cdot \hat \mu | \leq \sum_{k=1}^n \| Z_k \|$
and an application of Rosenthal's inequality \cite[p.\ 151]{gut} to the latter sum.
\end{proof}

\section*{Acknowledgements}

The authors are grateful to
Ben Hambly for discussions on scaling limits,
and to 
Ian Vernon for comments on multivariate normal distributions. 
This work was supported by the Engineering and Physical Sciences Research Council [grant number EP/J021784/1].

\end{document}